\font\smallit=cmti10
\newtheorem{thm}{\bfseries Theorem}
\newtheorem{lem}[thm]{\bfseries Lemma}        
\newtheorem{cor}[thm]{\bfseries Corollary}     
\newtheorem{defn}[thm]{\bfseries Definition}
\renewcommand\section{\@startsection {section}{1}{\z@}
{-30pt \@plus -1ex \@minus -.2ex}
{2.3ex \@plus.2ex}
{\normalfont\normalsize\bfseries}}
\renewcommand\subsection{\@startsection{subsection}{2}{\z@}
{-3.25ex\@plus -1ex \@minus -.2ex}
{1.5ex \@plus .2ex}
{\normalfont\normalsize\bfseries}}
\renewcommand{\@seccntformat}[1]{\csname the#1\endcsname. }
\def\stirling#1#2{\genfrac{\{}{\}}{0pt}{}{#1}{#2}}
\def\set#1{\{#1\}}
\def\B{\mathbb B}
\def\N{\mathbb N}
\def\Z{\mathbb Z}
\def\t1{\text{\ding{172}}}
\def\ex{\smallskip\noindent{\bf Example.}\ }
\def\endex{\hfill$\diamondsuit$\smallskip}
\begin{document}

\begin{center}
\uppercase{\bf Combinatorics of poly-Bernoulli numbers}
\vskip 20pt
{\bf Be\'ata B\'enyi}
     \\
{\smallit J\'ozsef E\"otv\"os College,
          Bajcsy-Zsilinszky u. 14., Baja, Hungary 6500}\\
{\tt benyi.beata@ejf.hu}\\
\vskip 10pt
{\bf Peter Hajnal}
     \\
{\smallit University of Szeged, Bolyai Insttitute,
          Aradi V\'ertan\'uk tere 1., Szeged, Hungary 6720}\\
{\tt hajnal@math.u-szeged.hu}\\
\end{center}

\vskip 30pt

\centerline{\bf Abstract}

\noindent
The $\B_n^{(k)}$ poly-Bernoulli numbers --- a natural generalization of
classical Bernoulli numbers ($B_n=\B_n^{(1)}$) --- were introduced by
Kaneko in 1997.  When the parameter $k$ is negative then $\B_n^{(k)}$
is a nonnegative number. Brewbaker was the first to give combinatorial
interpretation of these numbers. He proved that $\B_n^{(-k)}$ counts
the so called lonesum $0\text{-}1$ matrices of size $n\times k$.
Several other interpretations were pointed out. We survey these and
give new ones.  Our new interpretation, for example, gives a
transparent, combinatorial explanation of Kaneko's recursive formula
for poly-Bernoulli numbers.


\section{Introduction}\label{intro}

In the 17$^{\text{th}}$ century Faulhaber
\cite{Faulhaber} listed
the formulas giving the sum of the $k^{\text{th}}$
powers of the first $n$ positive integers when $k\leq 17$.
These formulas are always polynomials.
Jacob Bernoulli \cite{Bernoulli} 
realized the scheme in the coefficients
of these polynomials. Describing the coefficients
he introduced a new sequence of rational numbers. Later Euler
\cite{Euler}
recognized the significance of this sequence (that was connected
his several celebrated results). He named
the elements of the sequence as
Bernoulli numbers. For example Bernoulli numbers appear in the closed
formula for $\zeta(2k)$ (determining $\zeta(2)$ is the famous Basel problem,
that was solved by Euler).

In 1997 Kaneko (\cite{Kaneko})
introduced the poly-Bernoulli numbers.
Later Arakawa and Kaneko (\cite{KanekoRec}, \cite{AKMultipleZeta}) 
established strong connections
to multiple zeta values
(or Euler-Zagier sums, that were introduced and successfully used in 
different branches of mathematics earlier).

\begin{defn}{\rm (\cite{Kaneko})}
The poly-Bernoulli numbers 
$\set{\B_n^{(k)}}_{n\in\N, k\in Z}$
are defined by the following
exponential generating function
\[
\sum_{n=0}^\infty \B_n^{(k)}\frac{x^n}{n!}=
\frac{\text{Li}_k(1-e^{-x})}{1-e^{-x}},\qquad\text{for all }k\in\Z
\]
where
\[
Li_k(x)=\sum_{i=1}^\infty \frac{x^i}{i^k}.
\]
\end{defn}

The sequence $\set{\B_n^{(1)}}_n$ is just the 
Bernoulli numbers (with $\B_1^{(1)}=+\frac{1}{2}$).
Later Kaneko gave a recursive definition of the poly-Bernoulli numbers:

\begin{thm}{\rm (\cite{KanekoRec}, quoted in \cite{HaMa})}
\[
\B_n^{(k)}
=\frac{1}{n+1}\left(
\B_n^{(k-1)}-\sum_{m=1}^{n-1}\binom{n}{m-1}\B_m^{(k)}
\right),
\]
or equivalently
\[
\B_n^{(k-1)}=\B_n^{(k)}+\sum_{m=1}^n \binom{n}{m} \B_{n-(m-1)}^{(k)}.
\]
\end{thm}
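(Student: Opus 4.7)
My plan is to prove the second (equivalent) formulation
\[
\B_n^{(k-1)} = \B_n^{(k)} + \sum_{m=1}^n \binom{n}{m}\B_{n-(m-1)}^{(k)}
\]
at the level of exponential generating functions. Set $f_k(x) := \sum_{n\geq 0}\B_n^{(k)} x^n/n! = \text{Li}_k(1-e^{-x})/(1-e^{-x})$. The key analytic input is the elementary identity
\[
\text{Li}_{k-1}(y) = y\,\frac{d}{dy}\,\text{Li}_k(y),
\]
obtained by termwise differentiation of $\sum_i y^i/i^k$; this lets me trade any derivative of $\text{Li}_k$ for $\text{Li}_{k-1}$, which is exactly what produces $f_{k-1}$.

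Next, I substitute $u = 1-e^{-x}$ (so $du/dx = e^{-x} = 1-u$) and differentiate $f_k(x) = \text{Li}_k(u)/u$ with respect to $x$ using the quotient and chain rules. Rewriting $u\,\text{Li}_k'(u)$ as $\text{Li}_{k-1}(u) = u\,f_{k-1}(x)$ via the identity above and using $\text{Li}_k(u) = u\,f_k(x)$, a short calculation collapses to the clean relation
\[
(e^x - 1)\,f_k'(x) = f_{k-1}(x) - f_k(x),
\]
i.e.\ $f_{k-1}(x) = f_k(x) + (e^x - 1)\,f_k'(x)$.

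Finally, I extract coefficients. Since $e^x - 1 = \sum_{j\geq 1}x^j/j!$ and $f_k'(x) = \sum_{m\geq 0}\B_{m+1}^{(k)}\,x^m/m!$, the exponential generating function product formula yields
\[
[x^n/n!]\,(e^x - 1)\,f_k'(x) = \sum_{m=1}^n \binom{n}{m}\B_{n-m+1}^{(k)},
\]
which is exactly the second recursion. To deduce the first form, observe that the $m=1$ term of this sum equals $n\,\B_n^{(k)}$; combining with the leading $\B_n^{(k)}$ yields $(n+1)\B_n^{(k)}$, and transposing and dividing by $n+1$ recovers Kaneko's original formulation (after the index-shift $m\mapsto m-1$ in the residual sum, which matches $\binom{n}{m-1}\B_m^{(k)}$ with $m$ ranging over $1,\dots,n-1$).

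The only mildly delicate step is the differentiation producing the differential relation — it is short but must be executed carefully so that $\text{Li}_{k-1}$ appears cleanly in the numerator; everything else is routine generating-function bookkeeping.
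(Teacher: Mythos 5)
Your generating-function argument is correct. Writing $f_k(x)=\text{Li}_k(1-e^{-x})/(1-e^{-x})$ and $u=1-e^{-x}$, the identity $\text{Li}_{k-1}(u)=u\,\text{Li}_k'(u)$ together with $du/dx=1-u$ does collapse to $(e^x-1)f_k'(x)=f_{k-1}(x)-f_k(x)$, and the exponential product rule turns this into the second displayed recursion; isolating the $m=1$ term $n\B_n^{(k)}$ and reindexing gives the first form. (One small imprecision: the reindexing of the residual sum is $m\mapsto n+1-m$ combined with $\binom{n}{m}=\binom{n}{n-m}$, not ``$m\mapsto m-1$''; the conclusion is nevertheless right.) However, this is essentially Kaneko's original analytic derivation, which the paper explicitly declines to reproduce: the theorem is quoted without proof in the introduction, and the recursion is instead proved in the final section, in its negative-index instance $\B_n^{(-k)}=\B_n^{(-(k-1))}+\sum_{j=1}^n\binom{n}{j}\B_{n-(j-1)}^{(-(k-1))}$, by a purely combinatorial argument: classify the rows of a $\Gamma$-free $n\times k$ matrix according to whether they begin with a $0$, with a $1$ followed only by $0$s, or with a $1$ followed by at least one more $1$, and observe that choosing the $j$ rows of the latter two types forces all but the last of them to be completely determined, leaving a free $\Gamma$-free matrix of size $(n-j+1)\times(k-1)$. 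The trade-off is clear: your route is valid for every $k\in\Z$ and requires no combinatorial model, while the paper's route covers only negative upper index (where $\B_n^{(-k)}$ counts something) but supplies exactly the bijective explanation that the authors identify as the point of the paper.
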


We need a more combinatorial description of poly-Bernoulli numbers, 
given by Arakawa and Kaneko.

\begin{defn}
Let $a$ and $b$ be two natural numbers.
The Stirling number of the second kind
is the number of partitions of $[a]:=\set{1,2,\ldots,a}$
(or any set of $a$ elements)
into $b$ classes, and is denoted
by $\stirling{a}{b}$.
\end{defn}

Partitions can be described as equivalence relations (\cite{GKP}).

\begin{thm}{\rm (\cite{ArakawaKaneko})}
For any natural numbers $n$ and $k$ the following
formula holds
\[
\B_n^{(-k)}=\sum_{m=0}m!\stirling{n+1}{m+1}m!\stirling{k+1}{m+1}.
\]
\end{thm}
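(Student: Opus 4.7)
The strategy is to compute the exponential generating function $\sum_n \B_n^{(-k)}\frac{x^n}{n!}$ in closed form and read off its coefficients. The first ingredient will be the closed form
\[
\text{Li}_{-k}(y) = \sum_{m\geq 0} m!\stirling{k}{m}\frac{y^m}{(1-y)^{m+1}},
\]
which I would obtain by expanding $i^k=\sum_m m!\stirling{k}{m}\binom{i}{m}$ (the passage from powers to falling factorials) inside $\text{Li}_{-k}(y)=\sum_{i\geq 1} i^k y^i$, swapping the order of summation, and summing the binomial series $\sum_{i\geq 0}\binom{i}{m}y^i = y^m/(1-y)^{m+1}$.

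Next I would substitute $y=1-e^{-x}$; since $1-y=e^{-x}$, the factor $y^m/(1-y)^{m+1}$ collapses to $(e^x-1)^m e^x$, and dividing by $1-e^{-x}=(e^x-1)/e^x$ produces
\[
\sum_{n}\B_n^{(-k)}\frac{x^n}{n!}=\sum_{m\geq 1} m!\stirling{k}{m}(e^x-1)^{m-1}e^{2x}
\]
(for $k\geq 1$, since $\stirling{k}{0}=0$ eliminates the $m=0$ term). To extract the coefficient of $x^n/n!$ I would use the auxiliary identity $(e^x-1)^r e^x/r!=\sum_n \stirling{n+1}{r+1}\,x^n/n!$, which follows from the classical relation $\stirling{n+1}{r+1}=\sum_j\binom{n}{j}\stirling{j}{r}$ (or from a direct argument tracking the block that contains element $n+1$). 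The trick for handling the extra factor of $e^x$ is to write $e^{2x}=e^x+e^x(e^x-1)$, splitting $(e^x-1)^{m-1}e^{2x}$ as $(e^x-1)^{m-1}e^x+(e^x-1)^m e^x$; both summands are directly covered by the auxiliary identity.

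Applying this identity and reindexing (set $j=m-1$ in the first piece, $j=m$ in the second) combines the two contributions into a single sum
\[
\B_n^{(-k)}=\sum_{j\geq 0} (j!)^2\stirling{n+1}{j+1}\bigl[(j+1)\stirling{k}{j+1}+\stirling{k}{j}\bigr],
\]
at which point the Stirling recursion $\stirling{k+1}{j+1}=(j+1)\stirling{k}{j+1}+\stirling{k}{j}$ collapses the bracket into $\stirling{k+1}{j+1}$ and yields the claimed formula. I expect the main obstacle to be this final index bookkeeping: the substitution $y=1-e^{-x}$ shifts the Stirling index on $n$ (producing $\stirling{n+1}{\bullet}$) but not on $k$, so the asymmetric-looking shift $k\mapsto k+1$ — together with the symmetric factor $(m!)^2$ — only emerges after the splitting of $e^{2x}$ and the Stirling recursion are combined. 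Everything else amounts to routine generating-function manipulation.
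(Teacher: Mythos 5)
Your derivation is correct, and all the pieces check out: the Newton expansion $i^k=\sum_m m!\stirling{k}{m}\binom{i}{m}$, the binomial series, the collapse of $y^m/(1-y)^{m+1}$ to $(e^x-1)^m e^x$ under $y=1-e^{-x}$, the auxiliary identity $(e^x-1)^r e^x/r!=\sum_n\stirling{n+1}{r+1}x^n/n!$, and the final merge via $\stirling{k+1}{j+1}=(j+1)\stirling{k}{j+1}+\stirling{k}{j}$ all hold, and the bookkeeping in your last display is right. There is, however, nothing in the paper to compare this against: the theorem is quoted from Arakawa--Kaneko without proof, and the paper's only engagement with the formula is the combinatorial reading of its right-hand side (as counting pairs of ordered partitions of $[n+1]$ and $[k+1]$ with matching numbers of ordinary blocks) in Section 2.1 --- which interprets the sum but does not establish its equality with the generating-function definition of $\B_n^{(-k)}$. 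Your argument supplies exactly that missing analytic link, and it is essentially the standard derivation found in Arakawa--Kaneko and Brewbaker. Two trivial caveats: your extension of the sum to $i\geq 0$ and the vanishing of the $m=0$ term both rely on $k\geq 1$, so the case $k=0$ (where $\B_n^{(0)}=1$ and the right-hand side reduces to the single term $m=0$) should be checked separately; and you may want to state explicitly that the exchange of summation is over a finite range in $m$, so no convergence issue arises.
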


This theorem exhibits the fact that $\B_n^{(-k)}$
numbers are natural numbers. This formula
has initiated the combinatorial investigations
of poly-Bernoulli numbers.
There are several combinatorially 
described sequence of sets, such that their
size is $\B_n^{(-k)}$ (we call them {\it poly-Bernoulli families}). 
We can consider these statements
as alternative definitions of poly-Bernoulli numbers
or as answers to enumeration problems.

These combinatorial definitions give us the possibility
to explain previous identities
--- originally proven by algebraic methods ---
combinatorially.

The importance of the notion of poly-Bernoulli numbers is underlined by
the fact that there are several drastically
different combinatorial descriptions.

After reviewing the previous works 
we give a new 
poly-Bernoulli family. 
In our family we consider
$0\text{-}1$ matrices with certain
forbidden submatrix, what we call $\Gamma$. 
Our main result is that the number of
$n\times k$ 0-1 matrices that avoid the two submatrices 
$\begin{pmatrix} 1 & 1 \\ 1 & 1 \end{pmatrix}$
and
$\begin{pmatrix} 1 & 1 \\ 1 & 0 \end{pmatrix}$
is $\B_n^{(k)}$.
We present
a bijective
(\cite{Stanley}, \cite{StanleyBij})
proof of this.

There are several papers investigating
enumeration problems of matrices with some
forbidden substructure/pattern (see \cite{JuSe}, \cite{KMV}).
Their initial setups are different from ours.
In \cite{JuSe}
the order of rows and order of columns of matrices 
do not count. By permuting the rows and columns
of a matrix $M$
we obtain a matrix that includes the same patterns as $M$.
In \cite{KMV} the elements of the 
matrices do not play crucial role.
For example exchanging the 0's and 1's in a 0-1
matrix provides a matrix with the same 
patterns as the initial matrix.
Also their techniques are mainly analytical, that is 
different from our approach.

Finally some classical
results are explained combinatorially.
Among others a recursion for
$\B_{n}^{(k)}$ is proven based on our interpretation.
The proof is simple and straightforward.
The recursion is not new
--- as we mentioned earlier ---
(\cite{KanekoRec}), quoted in {\rm\cite{HaMa}).
All previous explanations of this recursive formula were
analytical. It was checked by algebraic manipulation
of generating functions.
This underlines a point: our main result can be
proven using a shortcut.
The number of $\Gamma$-free matrices satisfy
the mentioned recursion (see the end of our paper).
We know the same is true for poly-Bernoulli numbers
(\cite{KanekoRec}}), hence by induction
the main theorem follows.

From a combinatorialist's point of view
a simple checking the validity of a formula is not
satisfactory.
A combinatorial, bijective proof gives a deeper understanding of
the meaning of a formula.
Stanley (\cite{StanleyBij}) collected
several theorems, identities where
no bijective proof is known and urges
the combinatorialists to provide
one. Our paper is written in the spirit of Stanley's ideas.

The most recent research on poly-Bernoulli numbers
is mostly on extensions of the definition of poly-Bernoully
numbers and the number theoretical, analytical 
investigations of these extensions by analytical methods 
(\cite{BayadHamahata}, \cite{Kamano}, \cite{Sasaki}).
Our combinatorial approach is different from the
methods of these papers, and it might shed light
on some connections and might lead to new directions.


\section{Previous poly-Bernoulli families}

\subsection{The obvious interpretation}

Seeing the formula of Arakawa and Kaneko
one can easily come up with a combinatorial
problem such that the answer to it is
$\B_n^{(-k)}$.

Let $N$ be a set of $n$ elements and $K$ a set of $k$ elements.
One can think as $N=\set{1,2,\ldots,n}=:[n]$ and $K=[k]$.
Extend both sets with a special element:
$\widehat N=N\mathbin{\dot\cup}\set{n+1}$ and
$\widehat K=K\mathbin{\dot\cup}\set{k+1}$.
Take $\mathcal P_{\widehat N}$ a partition of $\widehat N$
and
 $\mathcal P_{\widehat K}$ a partition of $\widehat K$
with the same number of classes as $\mathcal P_{\widehat N}$.
Both partitions have a special class:
the class of the special element.
We call the other classes as {\sl ordinary} classes.
Let $m$ denote the number of ordinary classes
in $\mathcal P_{\widehat N}$ 
(that is the same as the number of ordinary classes
in $\mathcal P_{\widehat K}$).
Obviously $m\in\set{0,1,2,\ldots,\min\set{n,k}}$.
Order the ordinary classes arbitrarily
in both partitions. 
How many ways can we do this?

For fixed $m$ choosing $\mathcal P_{\widehat N}$
and ordering its ordinary classes can be done $m!\stirling{n+1}{m+1}$ ways.
Choosing the pair of ordered partitions can be done
$m!\stirling{n+1}{m+1}m!\stirling{k+1}{m+1}$ ways.
The answer to our question is
\[
\sum_{m=0}m!\stirling{n+1}{m+1}m!\stirling{k+1}{m+1}
=\B_n^{(-k)}.
\]

\subsection{Lonesum matrices}

\begin{defn}{\rm (\cite{Ryser})}
A $0\text{-}1$ matrix is lonesum iff
it can be reconstructed from its row and column sums.
\end{defn}

Obviously a lonesum matrix cannot contain the
\[
\begin{pmatrix}
1 & 0 \\
0 & 1 \\
\end{pmatrix},
\qquad
\begin{pmatrix}
0 & 1 \\
1 & 0 \\
\end{pmatrix}
\]
submatrices (a submatrix is a matrix that can be obtained 
by deletion of rows and
columns).  Indeed, in the case of the existence of one of the forbidden
submatrices we can switch it to the other one.  This way we obtain a
different matrix with the same row and column sums.  It turns out that
this property is a characterization \cite{Ryser}.

It is obvious that in a lonesum matrix
for two rows
`having the same row sum' is the same relation as `being equal'.
Even more, for rows $r_1$ and $r_2$
`the row sum in $r_1$ is at least the row sum in $r_2$' is the same as
`$r_1$ has $1$'s in the positions of the $1$'s of $r_2$'.
The same is true for columns.
Another easy observation is that changing the order of the rows/columns
does not affects the lonesum property.
These two observations guarantee that a lonesum matrix
can be rearranged by row/column order changes into a matrix where
the $1$'s in each row occupy a leading block of positions 
and these blocks are non-increasing as
we follow the natural order of rows. 
I.e.~substituting the $1$s with solid squares
we obtain a rotated stairs-like picture
(sequence of rectangles of height $1$, 
starting at the same vertical line
and with non-increasing width).
This type of diagrams are called
Young diagrams (see \cite{Stanley}
for further information).
This is also a characterization
of lonesum matrices. 

Look at the $1$s in a lonesum matrix
in the previous normal form, and consider it as a Young diagram:
the number of steps (block of rows with the same width)
is the number of different non-$0$
row sums and at the same time it is the number of different
non-$0$ column sums.
I.e.~the number of different non-$0$
row sums is the same as the number of different
non-$0$ column sums.

Let $M$ be a $0\text{-}1$ lonesum matrix
of size $n\times k$. Add a special
row and column with all $0$'s.
Let $\widehat M$ be the extended $(n+1)\times(k+1)$
matrix. `Having the same row sum' is an equivalence relation.
The corresponding partition has a special class, the set of
$0$ rows. By the extension we ensured that the
special class exists/non-empty. Let $m$ be the
number of non-special/ordinary classes.
The ordinary classes are ordered by their corresponding
row sums. In the same way we obtain an ordered 
partition of columns.
It is straightforward to prove that the two ordered
partitions give a coding of lonesum matrices.
This gives us the following theorem of Brewbaker, first 
presented in his MSc thesis.

\begin{thm}{\rm (\cite{BrewbakerThesis},\cite{Brewbaker})}
Let $\mathcal L_n^{(k)}$ 
denote the set of lonesum $0\text{-}1$
matrices of size $n\times k$. Then
\[
|\mathcal L_n^{(k)}|=
\sum_{m=0}m!\stirling{n+1}{m+1}m!\stirling{k+1}{m+1}
=\B_n^{(-k)}.
\]
\end{thm}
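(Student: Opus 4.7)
The plan is to build an explicit bijection between $\mathcal L_n^{(k)}$ and the set of pairs of ordered partitions counted in the ``obvious interpretation'' subsection; once this bijection is in hand, the first equality follows by counting and the second equality is the already-proved Arakawa--Kaneko identity.

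First I would encode a lonesum matrix $M\in \mathcal L_n^{(k)}$ as follows. Extend $M$ to a matrix $\widehat M$ of size $(n+1)\times(k+1)$ by appending a zero row (indexed by $n+1$) and a zero column (indexed by $k+1$). Declare two rows of $\widehat M$ equivalent if they have the same row sum; this yields a partition of $\widehat N=[n+1]$ whose distinguished class --- the one containing $n+1$ --- is precisely the set of all-zero rows, and is nonempty by construction. Order the remaining ordinary classes by their common row sum to obtain an ordered partition $\mathcal P_{\widehat N}$ with $m+1$ classes. Doing the analogous construction on columns produces an ordered partition $\mathcal P_{\widehat K}$ of $\widehat K$.

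Next I would check that both partitions have the same number $m$ of ordinary classes. This follows from the staircase normal form for lonesum matrices noted earlier: after permuting rows and columns, the $1$s of $M$ form a Young diagram, and the number of distinct nonzero row sums equals the number of distinct nonzero column sums because both count the number of steps in the staircase. This step is straightforward once the normal form is in place.

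The main obstacle is showing that the map $M\mapsto(\mathcal P_{\widehat N},\mathcal P_{\widehat K})$ is a bijection. For injectivity I would argue that $(\mathcal P_{\widehat N},\mathcal P_{\widehat K})$ determines the sequence of row sums and column sums of $\widehat M$ up to permutation, hence of $M$, and lonesum matrices are reconstructible from these sums by definition. For surjectivity, given any pair of ordered partitions of $\widehat N$ and $\widehat K$ into $m+1$ classes with distinguished classes containing $n+1$ and $k+1$ respectively, I would build the corresponding staircase matrix by assigning row sums $s_1<s_2<\cdots<s_m$ to the $m$ ordinary row classes (in their prescribed order) and an analogous column-sum assignment, the values of $s_i$ being dictated by requiring that the staircase shape have the right class sizes; Ryser's forbidden-submatrix characterization then guarantees the result is lonesum, and the composition with the encoding returns the original pair.

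Once the bijection is established the count is immediate: for each fixed $m$ the number of admissible $\mathcal P_{\widehat N}$ is $m!\stirling{n+1}{m+1}$ (pick the underlying partition with $m+1$ classes and then order the $m$ ordinary ones), and similarly $m!\stirling{k+1}{m+1}$ for $\mathcal P_{\widehat K}$. Summing over $m$ yields
\[
|\mathcal L_n^{(k)}|=\sum_{m=0} m!\stirling{n+1}{m+1}\,m!\stirling{k+1}{m+1},
\]
and the second equality is exactly the Arakawa--Kaneko formula.
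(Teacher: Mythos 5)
Your proposal is correct and follows essentially the same route as the paper: extend $M$ by a zero row and zero column, partition rows and columns by their sums, order the ordinary classes by those sums, and observe via the staircase (Young-diagram) normal form that the resulting pair of ordered partitions is a faithful coding of the lonesum matrix. You simply spell out the injectivity and surjectivity checks that the paper dismisses as ``straightforward,'' which is a reasonable amount of added detail.
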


\subsection{Callan permutations and max-ascending permutations}

Callan \cite{Callan} considered the set $[n+k]$.
We call the elements $1,2,\ldots, n$ left-value
elements ($n$ of them) and $n+1,n+2,\ldots,n+k$ right-value elements
($k$ of them).
We extend our universe
with $0$, a special left-value element and
with $n+k+1$, a special right-value element.
Let $N=[n]$, $K=\set{n+1,n+2,\ldots,n+k}$,
$\widehat N=\set{0}\mathbin{\dot\cup} [n]$, $\widehat K=K\mathbin{\dot\cup}
\set{n+k+1}$,
Consider
\[
\pi:0,\pi_1,\pi_2,\ldots,\pi_{n+k},n+k+1
\]
a permutation
of $\widehat N\mathbin{\dot\cup}\widehat K$ with the restriction
that its first element is $0$ and its last element is $n+k+1$.
Consider the following equivalence relation/partition of left-values:
two left-values are equivalent iff
`each element in the permutation between them is a left-value'.
Similarly one can define an equivalence relation on the
right-values: `each element in the permutation between them is a right-value'.
The equivalence classes are just the ``blocks'' of left- and right-values 
in permutation $\pi$. The left-right reading of $\pi$ gives an ordering 
of left-value and right-value blocks/classes.
The order starts with a left-value block 
(the equivalence class of $0$, the special class)
and ends with a right-value block
(the equivalence class of $n+k+1$, the special class).
Let $m$ be the common number of ordinary left-value blocks and
ordinary right-value blocks.

Callan considered permutations 
such that in each block the numbers
are in increasing order.
Let $\mathcal C_n^{(k)}$
denote the set of these permutations.
For example
\[
\mathcal C_2^{(2)}=\{
012{\bf 3}{\bf 4}{\bf 5},
01{\bf 3}2{\bf 4}{\bf 5},
01{\bf 4}2{\bf 3}{\bf 5},
01{\bf 3}{\bf 4}2{\bf 5},
02{\bf 3}1{\bf 4}{\bf 5},
02{\bf 4}1{\bf 3}{\bf 5},
02{\bf 3}{\bf 4}1{\bf 5},
0{\bf 3}12{\bf 4}{\bf 5},
\]
\[
0{\bf 3}1{\bf 4}2{\bf 5},
0{\bf 3}2{\bf 4}1{\bf 5},
0{\bf 3}{\bf 4}12{\bf 5},
0{\bf 4}12{\bf 3}{\bf 5},
0{\bf 4}1{\bf 3}2{\bf 5},
0{\bf 4}2{\bf 3}1{\bf 5}
\}
\]
(the right-value numbers are in boldface).

It is easy to see that describing
a Callan permutation we need to give the two ordered partitions of 
the left-value and right-value elements.
Indeed, inside the blocks the `increasing' condition
defines the order, and the ordering of the classes
let us know how to merge the
left-value and right-value blocks.
We obtain the following theorem.

\begin{thm}{\rm (\cite{Callan})}
\[
|\mathcal C_n^{(k)}|=
\sum_{m=0}m!\stirling{n+1}{m+1}m!\stirling{k+1}{m+1}
=\B_n^{(-k)}.
\]
\end{thm}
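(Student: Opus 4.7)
\medskip

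\noindent\textbf{Proof proposal.}
The plan is to set up an explicit bijection between $\mathcal C_n^{(k)}$ and the set of pairs
\[
\bigl(\mathcal P_{\widehat N}^{\mathrm{ord}},\ \mathcal P_{\widehat K}^{\mathrm{ord}}\bigr),
\]
where $\mathcal P_{\widehat N}^{\mathrm{ord}}$ is an ordered partition of $\widehat N$ with a distinguished class containing $0$, $\mathcal P_{\widehat K}^{\mathrm{ord}}$ is an ordered partition of $\widehat K$ with a distinguished class containing $n+k+1$, and both ordered partitions have the same number of ordinary classes. Once this bijection is in place, the counting is immediate: the number of such pairs with exactly $m$ ordinary classes on each side is $m!\stirling{n+1}{m+1}\cdot m!\stirling{k+1}{m+1}$, and summing over $m$ yields precisely the Arakawa--Kaneko formula for $\B_n^{(-k)}$.

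\medskip

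\noindent First I would define the forward map. Given $\pi\in\mathcal C_n^{(k)}$, I use the two equivalence relations on left-values and right-values described in the excerpt to break $\pi$ into maximal runs of same-type elements. Because consecutive blocks of the same type would merge, the blocks must \emph{alternate} between left-type and right-type. Since $\pi$ starts with $0$ (a left-value) and ends with $n+k+1$ (a right-value), there is an equal number $m+1$ of left-blocks $L_0,L_1,\ldots,L_m$ and right-blocks $R_0,R_1,\ldots,R_m$, written in reading order. The pair of ordered partitions is then
\[
\mathcal P_{\widehat N}^{\mathrm{ord}}=(L_0\mid L_1,\ldots,L_m),\qquad
\mathcal P_{\widehat K}^{\mathrm{ord}}=(R_0,\ldots,R_{m-1}\mid R_m),
\]
with $L_0$ (the block of $0$) and $R_m$ (the block of $n+k+1$) flagged as the special classes.

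\medskip

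\noindent Next I would define the inverse map. Given such a pair, reconstruct $\pi$ by writing the blocks in the alternating pattern $L_0R_0L_1R_1\cdots L_mR_m$ and sorting the entries within each block in increasing order, as required by the Callan condition. I would then check two routine things: (i) the result lies in $\mathcal C_n^{(k)}$, i.e.\ $0$ is first, $n+k+1$ is last, and every block is increasing; (ii) applying the forward map recovers the original pair, because the maximal same-type runs of the reconstructed permutation are exactly the $L_i$ and $R_i$.

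\medskip

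\noindent The main obstacle will be the \emph{alternation} step: one must justify that the block decomposition of any Callan permutation has exactly the same number of left- and right-blocks, and that specifying them as \emph{ordered} partitions (as opposed to just partitions) contains exactly the right amount of information to recover $\pi$. The first point follows from the ``starts with left, ends with right, alternates'' structure; the second from the observation that once one knows how many left-blocks precede each right-block, the increasing-within-block condition completes the description. After these verifications, the counting step is purely the summation already given in the first interpretation, so the theorem follows.
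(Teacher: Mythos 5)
Your proposal is correct and follows essentially the same route as the paper: the paper also encodes a Callan permutation by the two ordered partitions into alternating left- and right-value blocks (with the blocks of $0$ and $n+k+1$ as special classes), recovers the permutation by interleaving the blocks and sorting within each, and then reads off the count $\sum_m m!\stirling{n+1}{m+1}\,m!\stirling{k+1}{m+1}$. You merely spell out the alternation and inverse-map verifications that the paper leaves as ``easy to see.''
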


\[
\star
\]

He, Munro and Rao \cite{HeMuRa} introduced the notion
of max-ascending permutations. 
This is (in some sense) a ``dual'' of the notion of Callan permutation.
We mention that \cite{OEIS} does not contain this description
of poly-Bernoulli numbers.
Now we give a slightly 
different version of max-ascending permutations
than the one presented in \cite{HeMuRa}.

Again we consider
\[
\pi:0,\pi_1,\pi_2,\ldots,\pi_{n+k},n+k+1
\]
permutations
of $\widehat N\mathbin{\dot\cup}\widehat K$ with the restriction
that its first element is $0$ and its last element is $n+k+1$.
We call the first $n+1$ elements of the permutation left-position 
elements ($0$ will be referred to as special left-position element).
Consider the following equivalence relation/partition of left-positions:
two left-positions, say $i$ and $j$, are equivalent iff
`any integer $v$ between $\pi(i)$ and $\pi(j)$ occupies 
a left position in $\pi$'.
Similarly one can define an equivalence relation on the
right-positions: 
`each value between the ones, that occupy the positions, 
is in a right-position'.
The max-ascending
property of a permutation is that in a class of positions
our numbers must be in increasing order.

For example consider the case when $n=4$ and $k=2$.
Let $\pi$ be the permutation $621534$.
We extend it with a first $0$ and last $7$:
\[
\bordermatrix{~   & 1 & 2 & 3 & 4 & 5 & 6 & 7 & 8 \cr
                  \pi: & 0 & 6 & 2 & 1 & 5 & {\bf 3} & {\bf 4} & {\bf 7} \cr}.
\]
The top row contains the positions, and the lower row shows the values
that are permuted. The numbers in bold face 
are the values in right positions ($k+1$ of them).
The two left positions $1$ and $3$ are equivalent, since
the corresponding values at these positions are $0$ and $2$;
only the value $1$ is between then, and that is at a left position.
The two left positions $1$ and $2$ are not equivalent, since
the corresponding values at these positions are $0$ and $6$;
the value $3$ is between them, but it is in a right position 
(in the $6^{\text{th}}$ place).
The equivalence classes
of left positions are $\set{1,3,4}$ and $\set{2,5}$.
The corresponding values standing in the first 
equivalence class
are $0,1,2$, and $5,6$ are the values that occupy
the positions of the second class.
The equivalence classes
of right positions are $\set{6,7}$ and $\set{8}$.
The permutation is not max-ascending permutation:
since at the positions $1,3,4$ (they form an equivalence class)
the values $0,1,2$ are not in increasing order.
In the case of
$512634$ the equivalence classes are the same (we permuted 
the values within positions forming an equivalence class).
It is a max-ascending permutation.

We give another example
\[
\mathcal A_2^{(2)}=\{
012{\bf 345},
013{\bf 425},
013{\bf 245},
014{\bf 235},
031{\bf 425},
031{\bf 245},
041{\bf 235},
\]
\[
023{\bf 415},
023{\bf 145},
034{\bf 125},
024{\bf 135},
024{\bf 315},
042{\bf 135},
042{\bf 315}
\},
\]
where boldface denotes the numbers at right-positions.

The definitions of Callan and max-ascending permutations 
are very similar. By exchanging the roles of position/value
we transform one of them into the other.
Specially if we consider our permutations as a bijection from
$\{0,1,\ldots,n+k,n+k+1\}$ to itself then 
"invert permutation" is a bijection from
$\mathcal C_n^{(k)}$ to $\mathcal A_n^{(k)}$.

\begin{thm}
Let $\mathcal A_n^{(k)}$ 
denote the set of
max-ascending permutations 
of $\set{0,1,2,\ldots,n+k+1}$. Then
\[
|\mathcal A_n^{(k)}|=
\sum_{m=0}m!\stirling{n+1}{m+1}m!\stirling{k+1}{m+1}
=\B_n^{(-k)}.
\]
\end{thm}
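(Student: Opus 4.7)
The plan is to exploit the duality announced in the paragraph preceding the statement: viewed as bijections of $\{0,1,\ldots,n+k+1\}$ to itself, I expect the map $\pi\mapsto\pi^{-1}$ to carry $\mathcal{A}_n^{(k)}$ onto $\mathcal{C}_n^{(k)}$. Since the previous theorem supplies $|\mathcal{C}_n^{(k)}|$, this reduces the proof to verifying that inversion respects all of the defining conditions. The boundary constraints $\pi(0)=0$ and $\pi(n+k+1)=n+k+1$ are plainly self-dual under inversion, so inversion is indeed an involution on the underlying set of permutations being enumerated.

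The first substantive step is to match the two equivalence relations. For $\sigma\in\mathcal{C}_n^{(k)}$, two left-values $u,v\in\{0,\ldots,n\}$ lie in a common Callan block iff every position $p$ strictly between $\sigma^{-1}(u)$ and $\sigma^{-1}(v)$ satisfies $\sigma(p)\in\{0,\ldots,n\}$. Substituting $\sigma=\pi^{-1}$, this condition reads: every integer $p$ strictly between $\pi(u)$ and $\pi(v)$ satisfies $\pi^{-1}(p)\in\{0,\ldots,n\}$. Relabeling $p$ as a value, this is exactly the max-ascending equivalence of $u$ and $v$ regarded as left-positions of $\pi$. The analogous derivation relates right-values of $\pi^{-1}$ to right-positions of $\pi$. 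Hence the Callan partition of $\pi^{-1}$ coincides, set-theoretically, with the max-ascending partition of $\pi$.

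It remains to check that the two "increasing" requirements translate. Fix a common equivalence class $C\subseteq\{0,\ldots,n\}$. The Callan condition on $\pi^{-1}$ views $C$ as a set of values positioned at $\{\pi(v):v\in C\}$ and demands $\pi(u)<\pi(v)\Rightarrow u<v$. The max-ascending condition on $\pi$ views the same $C$ as a set of positions holding the values $\{\pi(v):v\in C\}$ and demands $u<v\Rightarrow\pi(u)<\pi(v)$. Since $\pi|_C$ is injective, these two implications are equivalent; the identical check works for right classes. This yields $\pi\in\mathcal{A}_n^{(k)}\iff\pi^{-1}\in\mathcal{C}_n^{(k)}$, whence the previous theorem gives the claimed formula. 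The only real obstacle is bookkeeping: the Callan condition speaks of "between in position order" while the max-ascending condition speaks of "between in value order," and one must keep track of the fact that inversion swaps these two notions precisely.
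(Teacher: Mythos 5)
Your proposal is correct and follows the same route the paper takes: the paper simply asserts that exchanging the roles of position and value (i.e., the map $\pi\mapsto\pi^{-1}$) is a bijection from $\mathcal C_n^{(k)}$ to $\mathcal A_n^{(k)}$ and deduces the count from Callan's theorem. You merely supply the verification details that the paper leaves implicit, and your bookkeeping of the two equivalence relations and the two increasing conditions is accurate.
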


\subsection{Vesztergombi permutations}

Vesztergombi \cite{Vesztergombi} investigated permutations of $[n+k]$
with the property that $-k< \pi(i)-i<n$.
She determined a formula for their number.
Lov\'asz (\cite{Lovasz} Exercise 4.36.)
gives a combinatorial proof of this result.
Launois working on quantum matrices
slightly modified Vesztergombi's
set and realized the connection to poly-Bernoulli numbers.
Many of these results are summarized
in the following theorem.

\begin{thm}
Let $\mathcal V_n^{(k)}$
denote the set of
permutations $\pi$ of $[n+k]$ such that
$-k\leq \pi(i)-i\leq n$ for all $i$ in $[n+k]$.
\[
|\mathcal V_n^{(k)}|=
\sum_{m=0}m!\stirling{n+1}{m+1}m!\stirling{k+1}{m+1}
=\B_n^{(-k)}.
\]
\end{thm}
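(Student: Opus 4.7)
My plan is to prove the theorem by direct enumeration, computing $|\mathcal V_n^{(k)}|$ in closed form and recognising the Arakawa--Kaneko sum for $\B_n^{(-k)}$.

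The first step is to represent $\pi \in \mathcal V_n^{(k)}$ by its $(n+k) \times (n+k)$ permutation matrix and split it into four blocks by the row partition $\{1,\dots,k\} \sqcup \{k{+}1,\dots,n{+}k\}$ and the column partition $\{1,\dots,n\} \sqcup \{n{+}1,\dots,n{+}k\}$. Translating the band condition $-k \le \pi(i)-i \le n$ block-by-block, I will verify that the upper-left $k \times n$ and lower-right $n \times k$ blocks are fully allowed, the upper-right $k \times k$ block admits 1's only at positions $(i,n+j')$ with $j' \le i$ (a lower-triangular staircase), and the lower-left $n \times n$ block admits 1's only in its upper-triangular staircase. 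Routine row- and column-sum bookkeeping then shows that for some $t \in \{0,1,\dots,\min(n,k)\}$, the four blocks contain exactly $t$, $k-t$, $n-t$, and $t$ ones respectively, and moreover the 1's in the two off-diagonal (triangular) blocks completely determine which rows and columns are available for the two unrestricted blocks.

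The second step is to count configurations for each fixed $t$. The upper-left block contributes $t!$ (an arbitrary bijection between $t$ determined rows and $t$ determined columns), and similarly the lower-right block contributes $t!$. The upper-right and lower-left blocks contribute numbers of partial permutation matrices on $k \times k$ and $n \times n$ staircase boards; for this I appeal to the classical rook-polynomial result that the number of placements of $r$ non-attacking rooks on the lower-triangular part of a $k \times k$ board equals $\stirling{k+1}{k+1-r}$. With $r=k-t$ this gives $\stirling{k+1}{t+1}$, and analogously $\stirling{n+1}{t+1}$ for the lower-left block. Multiplying and summing over $t$ yields $\sum_t (t!)^2 \stirling{n+1}{t+1}\stirling{k+1}{t+1}$, which by the Arakawa--Kaneko formula recalled earlier equals $\B_n^{(-k)}$.

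I expect the main technical obstacle to be the independence claim in the first step: that a choice of staircase pattern in the upper-right block together with a staircase pattern in the lower-left block consistently and uniquely prescribes the row- and column-sets used by the remaining two unrestricted blocks, with no hidden compatibility condition. The staircase rook identity $\stirling{k+1}{k+1-r}$ is classical, but if a self-contained argument is preferred one may establish it by the standard bijection between rook placements on the staircase board and set partitions of $[k+1]$ into $k+1-r$ parts.
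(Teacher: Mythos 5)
Your argument is essentially correct, and it is worth saying up front that the paper itself offers no proof of this theorem: it sits in the survey section and is simply attributed to Vesztergombi, Lov\'asz and Launois, so your write-up is a genuine self-contained proof rather than a variant of one in the text. The block analysis checks out: with $j=\pi(i)$ the band condition $i-k\le j\le i+n$ is vacuous on the $k\times n$ upper-left and $n\times k$ lower-right blocks, forces $j'\le i$ on the upper-right $k\times k$ block and $j\ge i'$ on the lower-left $n\times n$ block, and row/column bookkeeping gives the counts $t$, $k-t$, $n-t$, $t$. The ``independence'' issue you flag as the main obstacle is in fact not one: for fixed $t$, any placement of $k-t$ non-attacking rooks on the upper-right staircase and any placement of $n-t$ non-attacking rooks on the lower-left staircase leave exactly $t$ uncovered top rows, $t$ uncovered left columns, $t$ uncovered bottom rows and $t$ uncovered right columns, every pair of bijections between these free sets yields a permutation satisfying the band condition, and every $\pi\in\mathcal V_n^{(k)}$ decomposes uniquely this way; there is no hidden compatibility constraint. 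Combined with the classical fact that the number of placements of $k-t$ non-attacking rooks on the staircase board with row lengths $1,2,\ldots,k$ is $\stirling{k+1}{t+1}$ (provable, as you note, by the rook-placement/set-partition bijection, which is essentially the same combinatorics as the paper's ``obvious interpretation'' in Section 2.1), this yields $\sum_t (t!)^2\stirling{n+1}{t+1}\stirling{k+1}{t+1}=\B_n^{(-k)}$. One detail to keep straight in the final write-up: the theorem uses the weak inequalities $-k\le\pi(i)-i\le n$ (Launois's modification of Vesztergombi's strict ones), and your triangular boards correctly include the diagonal, which is what the weak version requires.
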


\subsection{Acyclic orientations of $K_{n,k}$}

Very recently P.~Cameron, C.~Glass and R. Schumacher
gave a new combinatorial interpretation
of poly-Bernoulli numbers.
(Peter Cameron published this in a note at his blog
on 19th of January, 2014 \cite{Cameron}.)

Let $K_{n,k}$ be the complete bipartite graph
with color classes of size $n$ and $k$.
An orientation of a graph is acyclic iff it
does not contain a directed cycle.

\begin{thm}{\rm (\cite{Cameron})}
The number of acyclic orientations of $K_{n,k}$
is $\B_n^{(-k)}$.
\end{thm}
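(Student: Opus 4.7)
The plan is to reduce the count of acyclic orientations of $K_{n,k}$ to Brewbaker's theorem on lonesum matrices by encoding each orientation as an $n\times k$ 0-1 matrix. Write the color classes as $N=[n]$ and $K=[k]$, and assign to each orientation $D$ the $n\times k$ 0-1 matrix $M(D)$ with $M(D)_{ij}=1$ iff the edge between $i\in N$ and $j\in K$ is oriented from $i$ to $j$. This already gives a bijection between arbitrary orientations of $K_{n,k}$ and arbitrary $n\times k$ 0-1 matrices; the task is to single out the acyclic ones by a forbidden-submatrix condition.

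The key claim is that $D$ is acyclic iff $M(D)$ contains neither $\begin{pmatrix}1&0\\0&1\end{pmatrix}$ nor $\begin{pmatrix}0&1\\1&0\end{pmatrix}$ as a $2\times 2$ submatrix. One direction is immediate: either pattern on rows $i_1,i_2$ and columns $j_1,j_2$ encodes a directed $4$-cycle through $i_1,j_1,i_2,j_2$. For the converse I would suppose $D$ contains a directed cycle and take a shortest one $i_1\to j_1\to i_2\to j_2\to\ldots\to i_\ell\to j_\ell\to i_1$, whose length is even and at least $4$. If $\ell\geq 3$, inspect the edge between $i_1$ and $j_2$: the orientation $i_1\to j_2$ yields the shorter directed cycle $i_1\to j_2\to i_3\to\ldots\to j_\ell\to i_1$ of length $2(\ell-1)$, contradicting minimality; the orientation $j_2\to i_1$ yields the $4$-cycle $i_1\to j_1\to i_2\to j_2\to i_1$. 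Thus a shortest cycle has length exactly $4$, which by the immediate direction produces one of the forbidden $2\times 2$ patterns, a contradiction.

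The matrices avoiding these two ``swap'' patterns are, by the Ryser characterization recalled in the paper's lonesum-matrix section, precisely the elements of $\mathcal{L}_n^{(k)}$, and Brewbaker's theorem then gives the count $\B_n^{(-k)}$. The only non-bookkeeping step is the cycle-shortening argument inside the key claim; everything else is routine translation between orientations of $K_{n,k}$ and 0-1 matrices.
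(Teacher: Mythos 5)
Your proposal is correct and follows essentially the same route as the paper: encode orientations as $0$-$1$ matrices, observe that acyclicity of an orientation of $K_{n,k}$ is equivalent to forbidding the two ``swap'' $2\times 2$ submatrices, invoke Ryser's characterization of lonesum matrices, and conclude by Brewbaker's theorem. The only difference is that you spell out, via the shortest-cycle argument, the step the paper dismisses as ``a simple graph theoretical observation,'' and that argument is sound.
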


Let $\mathcal O_n^k$ be the set of
acyclic orientations of $K_{n,k}$.
A simple graph theoretical observation
gives us that in a complete bipartite graph acyclicity 
is equivalent to `not having oriented $C_4$'.
The theorem is immediate from a bijection between $\mathcal O_n^k$
and $\mathcal L_n^k$ (i.e. set of lonesum matrices).
The bijection is easy and natural.
Identify the two parts of nodes in $K_{n,k}$
with the rows and columns of a matrix size $n\times k$.
Any edge has two possible orientations, hence
we can code an actual oriented edge by a bit (0/1)
according to its direction between the two
color classes.
The oriented graph can be coded by a $0\text{-}1$
matrix of size $n\times k$. Forbidding
oriented $C_4$'s is equivalent
to forbidding two submatrices of size $2\times 2$.
These matrices are the same as the ones in 
Ryser's characterization of
lonesum matrices.
So the desired bijection is just 
the simple coding we have described.


\section{A new poly-Bernoulli family}

Let $M$ be a $0\text{-}1$ matrix.
We say that three $1$s in $M$ form a $\Gamma$ configuration
iff two of them are in the same row
(one, let us say $a$, precedes the other) and the third is under
$a$.
I.e. the three $1$s form the upper left, upper right and lower left
elements of a submatrix of size $2\times 2$.
So we do not have any
condition on the lower right element of the submatrix of size $2\times 2$,
containing the $\Gamma$.

We will consider matrices without
$\Gamma$ configuration. Let $\mathcal G_n^{(k)}$
denote
the set of all $0\text{-}1$ matrices of size $n\times k$ without $\Gamma$.

The following theorem is our main theorem.

\begin{thm}
\[
|\mathcal G_n^{(k)}|=\B_n^{(-k)}.
\]
\end{thm}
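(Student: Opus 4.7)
The plan is a bijective proof, connecting $\mathcal G_n^{(k)}$ to the pairs of ordered set partitions enumerated by the Arakawa--Kaneko formula.

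First I would prove the following structural lemma: in any $\Gamma$-free matrix every $1$ is the rightmost $1$ of its row or the bottommost $1$ of its column (or both). Indeed, if $(i,j)=1$ were neither, there would exist $j'>j$ with $(i,j')=1$ and $i'>i$ with $(i',j)=1$, and these three $1$'s would form a $\Gamma$. Hence each $M\in \mathcal G_n^{(k)}$ is determined by two functions, $r:[n]\to\{0,\ldots,k\}$ (column of the rightmost $1$ in row $i$, or $0$ if the row is empty) and $b:[k]\to\{0,\ldots,n\}$ (row of the bottommost $1$ in column $j$, or $0$): the $1$'s of $M$ are exactly $\{(i,r(i)):r(i)\ge 1\}\cup\{(b(j),j):b(j)\ge 1\}$. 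The admissible pairs $(r,b)$ are characterized by the natural compatibility conditions forced by $\Gamma$-freeness and the consistency of $r,b$: $r(i)=j\ge 1 \Rightarrow b(j)\ge i$ and $b(j')\ne i$ for $j'>j$; the symmetric conditions for $b(j)=i\ge 1$; and the "emptiness" constraints $r(i)=0\Rightarrow i\notin b([k])$ and $b(j)=0\Rightarrow j\notin r([n])$.

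Next, from $(r,b)$ I would extract a pair of ordered set partitions, one of $\widehat N=\{0\}\cup[n]$ and one of $\widehat K=[k]\cup\{k+1\}$, with a common number $m+1$ of blocks and a distinguished special block on each side. The unordered partition groups rows sharing the same "row-signature" (the set of columns in which they carry $1$'s), with the empty rows joining $0$ in the special class, and the columns are grouped analogously; a short argument shows the two sides yield the same value of $m$. The ordering of the $m$ ordinary blocks on each side --- the additional data corresponding to the $(m!)^2$ factor in the formula --- is to be read off canonically from the matrix, in the spirit of the Callan-permutation interpretation reviewed earlier, using the interleaving induced by the "type-I" $1$'s (those simultaneously rightmost in row and bottommost in column), supplemented by the "type-II" (rightmost only) and "type-III" (bottommost only) $1$'s. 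Summing over $m$ then yields $|\mathcal G_n^{(k)}|=\sum_m(m!)^2\,\stirling{n+1}{m+1}\stirling{k+1}{m+1}=\B_n^{(-k)}$.

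The main obstacle will be exactly this last step: although the unordered partitions and the parameter $m$ drop out cleanly from $(r,b)$, producing two independent and canonically defined orderings of the ordinary blocks so that the $(m!)^2$ factor is transparent is delicate --- small examples ($n=k=2$) already show that the type-I matching alone is too coarse and must be combined with the interleaving provided by type-II/III $1$'s. If this bookkeeping proves too intricate, I will fall back on the shortcut the author mentions in the introduction: verify directly, by a combinatorial split on the rightmost column, that $|\mathcal G_n^{(k)}|$ satisfies Kaneko's recursion $|\mathcal G_n^{(k+1)}|=|\mathcal G_n^{(k)}|+\sum_{m=1}^n\binom{n}{m}|\mathcal G_{n-m+1}^{(k)}|$, and conclude the theorem by induction from the trivial base cases.
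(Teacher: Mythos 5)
Your structural lemma (every $1$ in a $\Gamma$-free matrix is the rightmost $1$ of its row or the lowest $1$ of its column) is correct and is essentially the paper's own starting point (its ``top-$1$s'' and ``important $1$s''). The proposal breaks at the very next step: the partition by row/column \emph{signatures} is the wrong partition, and the asserted ``short argument'' that the two sides yield the same $m$ does not exist. For the $\Gamma$-free matrix $\left(\begin{smallmatrix}1&1&0\\0&1&1\end{smallmatrix}\right)$ the row signatures are $\{1,2\}$ and $\{2,3\}$ (two ordinary classes) while the column signatures are $\{1\}$, $\{1,2\}$, $\{2\}$ (three ordinary classes). Even where the two numbers happen to agree, the stratification is wrong: for $n=k=2$ there are \emph{five} $\Gamma$-free matrices whose two rows and two columns all have distinct nonempty signatures, namely $\left(\begin{smallmatrix}1&0\\0&1\end{smallmatrix}\right)$, $\left(\begin{smallmatrix}0&1\\1&0\end{smallmatrix}\right)$, $\left(\begin{smallmatrix}1&1\\0&1\end{smallmatrix}\right)$, $\left(\begin{smallmatrix}1&0\\1&1\end{smallmatrix}\right)$, $\left(\begin{smallmatrix}0&1\\1&1\end{smallmatrix}\right)$, whereas the $m=2$ term of the Arakawa--Kaneko formula is $(2!)^2\stirling{3}{3}^2=4$. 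A fiber of the wrong size cannot be repaired by any choice of ``canonical orderings'', so the difficulty you flag as bookkeeping is in fact a fatal flaw of the chosen partition. The paper uses a coarser column partition --- two columns are equivalent iff their topmost $1$s lie in the same row --- which correctly places $\left(\begin{smallmatrix}1&1\\0&1\end{smallmatrix}\right)$ in the $m=1$ stratum; rows are partitioned by the position of a designated important $1$, the last top-$1$s give a matching of ordinary column classes to ordinary row classes (accounting, with the two Stirling numbers, for one factor of $m!$), and a separate lemma shows the remaining undetermined entries admit exactly $m!$ $\Gamma$-free completions.

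Your fallback is sound in spirit --- the paper itself proves Kaneko's recursion for $|\mathcal G_n^{(k)}|$ combinatorially in its final section and notes that induction then yields the theorem --- but the split must be made on the \emph{first} column (or, by transposition, the first row), not the rightmost one. $\Gamma$-freeness is not invariant under reversing the column order. The step that makes the recursion work is: if $j\ge 1$ rows carry a $1$ in the first column, then every such row except the lowest can contain no further $1$ (a further $1$ in row $i$, together with the first-column $1$s of row $i$ and of a lower chosen row, is a $\Gamma$), which collapses the count to $\binom{n}{j}\,|\mathcal G_{n-j+1}^{(k-1)}|$. The analogous statement for the last column is false, as $\left(\begin{smallmatrix}1&1\\0&1\end{smallmatrix}\right)$ again shows: both rows end in $1$ and the upper one still carries another $1$.
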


The rest of the section is devoted to the
combinatorial proof of this statement.

The obvious way to prove our claim is to give
a bijection to one of the previous sets, where
the size is known to be $\B_n^{(-k)}$.
The obvious candidate is $\mathcal L_n^{(k)}$.
$\Gamma$-free matrices were considered from 
the point of extremal combinatorics (see \cite{FurediHajnal}).
It is known that $\Gamma$-free matrices
of size $n\times k$ contain
at most $n+k-1$ many $1$s.
Among Brewbaker's lonesum matrices (in contrast) 
there are some with many $1$s
(for example the all-$1$ matrix) and there 
are others with few $1$s.
We do not know straight, simple bijection between lonesum matrices
and matrices with no $\Gamma$.
Instead, we follow the obvious scheme:
we code $\Gamma$-free matrices with two partitions and two orders.
From this and from the previous bijections one can construct
a direct bijection between the two sets of matrices 
but that is not appealing.

\noindent{\it Proof.\ }
Let $M$ be a $0\text{-}1$ matrix of size $n\times k$.  We say that a
position/element has {\sl height} $n-i$ iff it is in the $i^{\text{th}}$
row.  The {\sl top-$1$} of a column is its $1$ element of maximal height.
The {\sl height of a column} is the height of 
its top-$1$ or $0$, whenever it
is a $0$ column.

Let $M$ be a matrix without $\Gamma$ configuration.  Let $\widehat M$
be the extension of it with an all $0$s column and row.
(We have defined the {\sl height of all-$0$ columns} to be
$0$. In $\widehat M$ non-$0$ columns have $0$ at the bottom, hence 
their heights are at least $1$.)  `Having the
same height' is an equivalence relation on the set of columns in
$\widehat M$.  The class of the additional column is the set of $0$
columns (that is not empty since we work with the extended matrix).
We call the class of the additional 
column `the special class'. Its elements
are the special columns. So special column means `all-$0$ column'.
The additional column in $\widehat M$ ensures that we
have this special class. The other classes are the ordinary classes.
Let $m$ be the number of the ordinary  classes. These $m$ classes
partition the set of non-$0$ columns. The total number of equivalence classes
is $m+1$.

In order to clarify the details after the formal description
we explain the steps on a specific example.
$\diamondsuit$ denote the end of example, when we 
return to the abstract discussion,

\ex
$M$ is a $\Gamma$-free matrix:
\[
M=\begin{pmatrix}
0&1&0&0&0&0&0&0\\
0&0&0&0&0&0&0&1\\
1&0&1&0&0&0&0&0\\
0&1&0&1&0&1&0&0\\
0&0&1&0&0&0&0&0\\
0&0&0&0&0&0&1&0
\end{pmatrix},
\widehat M=\left(\begin{array}{cccccccc|c}
0&1&0&0&0&0&0&0&0\\
0&0&0&0&0&0&0&1&0\\
1&0&1&0&0&0&0&0&0\\
0&1&0&1&0&1&0&0&0\\
0&0&1&0&0&0&0&0&0\\
0&0&0&0&0&0&1&0&0\\
\hline
0&0&0&0&0&0&0&0&0\\
\end{array}\right)
\]
$\widehat M$ is ``basically the same''
as $M$. It only contains an additional all-$0$ column (the last one),
and an additional all-$0$ row (the last one).
In $\widehat M$ each column has a height. The height depends on the 
position of the top-$1$ of the considered column
(it counts how many positions are under it).
The all-$0$ 
column has height $0$. We circled the top-$1$s
and marked the height of the columns at the upper 
border of our matrix $\widehat M$:
\[
\widehat M=\bordermatrix{
~&4  &  6&4  &3  &0&3  &1  &5  &0\cr
~&0  &\t1&0  &0  &0&0  &0  &0  &0\cr
~&0  &0  &0  &0  &0&0  &0  &\t1&0\cr
~&\t1&0  &\t1&0  &0&0  &0  &0  &0\cr
~&0  &1  &0  &\t1&0&\t1&0  &0  &0\cr
~&0  &0  &1  &0  &0&0  &0  &0  &0\cr
~&0  &0  &0  &0  &0&0  &\t1&0  &0\cr
~&0  &0  &0  &0  &0&0  &0  &0  &0\cr
},
\]
`Having the same height' is an equivalence relation among the columns. 
In our example there are  six different heights considering all
the columns: $0,1,3,4,5,6$.
Two columns have height $0$ (they are the two all-$0$ columns).
One of them is the additional column of $\widehat M$,
the other was present in $M$. They
are the special columns, forming the special
class of our equivalence relation on columns. The other five heights
define
five ordinary classes. One of these classes is formed by the first and third
column, they are the columns with height $4$.
\endex

Take $\mathcal C$, any non-special class 
of columns (the columns in $\mathcal C$ 
are ordered as the indices order
the whole set of columns). Since our matrix does not contain $\Gamma$
all columns but the last one has only one $1$ (that is necessarily the top-$1$)
of the same height. We say that the last elements/columns of non-special classes are
{\sl important columns}. Important columns in $\widehat M$ form a
submatrix $M_0$ of size $(n+1)\times m$.

In $M_0$ the top-$1$s are called {\sl important elements}.
In each row without top-$1$ the {\sl leading $1$} 
(the $1$ with minimal column index)
is also called {\sl important $1$}.
So in all non-$0$ rows of $M_0$ there is
exactly one important $1$. 

Each row has an `indentation': the position of the
important $1$, i.e.~last top-$1$ if the
row contains a top-$1$, otherwise the position of the first $1$ (or $0$ if
the row is all $0$s). The row indentations determine a partition of
the set of rows.

The two partitions have the same number of parts, namely, $m+1$ where $m$
was introduced when describing the column partition.

The last top-$1$s are in different rows and columns, hence determine an
$m\times m$ submatrix which becomes a permutation matrix if all entries
except the last top-$1$s are zeroed out. This permutation matrix
determines an identification of the ordinary 
column classes and ordinary row classes.

\ex
$M_0$ contains the last columns of the ordinary equivalence
classes. In our example it has $5$ columns
(the upper border of our example we
see the common height of the column class, and the original index of each row).

The top-$1$s are circled. There are two rows without top-$1$. 
The last row is all-$0$,
the other in not all-$0$. Its leading $1$ is in bold face.
We also marked
(at the left border of $M_0$) the indentations
of rows. The indentation/label of an all-$0$ row is $0$:
\[
M_0=\bordermatrix{
~&{6\atop 2^\text{nd}}&{4\atop 3^\text{rd}}&{3\atop 6^\text{th}}&{1\atop 7^\text{th}}&{5\atop 8^\text{th}}\cr
1&\t1&0  &0  &0  &0  \cr
5&0  &0  &0  &0  &\t1\cr
2&0  &\t1&0  &0  &0  \cr
3&1  &0  &\t1&0  &0  \cr
2&0  &\text{\bf 1}  &0  &0  &0  \cr
4&0  &0  &0  &\t1&0  \cr
0&0  &0  &0  &0  &0  \cr
}.
\]
Note that
since each column has a top-$1$ we have $1,2,3,4,5$ as labels
(all-$0$ columns form the special column class that 
has no representative in $M_0$).
The last row of
$M_0$ is an all-$0$ row, hence we have the $0$ label too.

The top-$1$s define an identification
of ordinary row and column classes.
We use the letters $a,b,c,d,e$
for the identified
row/column classes.
$s$ marks the special rows (in our case there
is only one special row, the last row):
\[
M_0=\bordermatrix{
~&{a\atop 2^\text{nd}}&{b\atop 3^\text{rd}}&{c\atop 6^\text{th}}&{d\atop 7^\text{th}}&{e\atop 8^\text{th}}\cr
a&\t1&0  &0  &0  &0  \cr
e&0  &0  &0  &0  &\t1\cr
b&0  &\t1&0  &0  &0  \cr
c&1  &0  &\t1&0  &0  \cr
b&0  &\text{\bf 1}  &0  &0  &0  \cr
d&0  &0  &0  &\t1&0  \cr
s&0  &0  &0  &0  &0  \cr
}.
\]
\endex

A partition of columns into $m+1$ classes,
and partition of rows into $m+1$ classes, and
a bijection between the non-special row- and column-classes
--- after fixing $m$ ---
leaves
\[
m!\stirling{n+1}{m+1}\stirling{k+1}{m+1}
\]
possibilities.
This information (knowing the two partitions and
the correspondence) codes a big part of matrix $\widehat M$:

We know that the columns and rows of the special classes
are all $0$s. A non-special column class $\mathcal C$ has a corresponding
class of rows. The top row of the corresponding row class
gives us the common height of the columns in $\mathcal C$.
So we know each non-important columns (they have only one $1$, 
defining its known height).
We narrowed the unknown $1$s of $M$ into the non-$0$ rows
of $M_0$. Easy to check that $\widehat M$ contains $\Gamma$ iff $M_0$
contains one.

\ex
Let us consider our example. $m=5$ and the row/column partitions are marked
at the left and upper border of our matrix. We use $a,b,c,d,e,s$ as
names for the classes in both cases (hence the classes of the two partitions
are identified). $s$ denotes the two special classes (the class of 
the last row and last column). 
\[
\widehat M=\bordermatrix{
~&b&a&b&c&s&c&d&e&s\cr
a&~&~&~&~&~&~&~&~&~\cr
e&~&~&~&~&~&~&~&~&~\cr
b&~&~&~&~&~&~&~&~&~\cr
c&~&~&~&~&~&~&~&~&~\cr
b&~&~&~&~&~&~&~&~&~\cr
d&~&~&~&~&~&~&~&~&~\cr
s&~&~&~&~&~&~&~&~&~\cr
}
\]
We can recover a large portion of $\widehat M$ from this information.
The class of the last column, and the class of the
last row are the two special classes (named by $s$).
We must have all $0$s
in these rows/columns.

The columns of $M_0$ are the last columns of
the ordinary classes.
For example the top-$1$s in the column class $a$ 
can be decoded from the rows belonging to class $a$:
The highest row with label $a$ marks the
common row of top-$1$s in columns labelled by $a$.
This way we can  determine the common heights of the
ordinary column classes, hence
recover the top-$1$s.
Then we know all elements above a top-$1$
must have value $0$.
All columns with label $a$ but the last one contains 
only its top-$1$ as non-$0$ element.

In our example we sum up the information gained so far (the top border
contains the recovered heights and
labels for the columns from $M_0$):
\[
\widehat M=\bordermatrix{
~&4&{6\atop M_0}&{4\atop M_0}&3&0&{3\atop M_0}&{1\atop M_0}&{5\atop M_0}&0\cr
b&0&1&0&0&0&0&0&0&0\cr
f&0&~&0&0&0&0&0&1&0\cr
a&1&~&1&0&0&0&0&~&0\cr
c&0&~&~&1&0&1&0&~&0\cr
a&0&~&~&0&0&~&0&~&0\cr
e&0&~&~&0&0&~&1&~&0\cr
d&0&0&0&0&0&0&0&0&0\cr
}
\]
Knowing the top-$1$s enable us to recover the indentations
(relative to $M_0$)
belonging to the ordinary row classes.
If there is a row without a top-$1$, then from its row
label we know the position of its first $1$ 
(hence we know that in that row at previous positions we have $0$s):
\[
\widehat M=\bordermatrix{
~&4&{6\atop M_0}&{4\atop M_0}&3&0&{3\atop M_0}&{1\atop M_0}&{5\atop M_0}&0\cr
1/M_0&0&1&0&0&0&0&0&0&0\cr
5/M_0&0&~&0&0&0&0&0&1&0\cr
2/M_0&1&~&1&0&0&0&0&~&0\cr
3/M_0&0&~&~&1&0&1&0&~&0\cr
2/M_0&0&0&1&0&0&~&0&~&0\cr
4/M_0&0&~&~&0&0&~&1&~&0\cr
0/M_0&0&0&0&0&0&0&0&0&0\cr
}
\]
\endex

Note that there are many positions where we do not
know
the elements
of our matrix (all are located in $M_0$). Also when counting
the possibilities we have a missing $m!$
factor. The rest of the proof shows
that filling in the missing elements
(resulting a $\Gamma$-free matrix) can be done $m!$ many ways.

Now on we concentrate on $M_0$ (that is where the unknown
elements are).
The positions of important ones are known.
In each column of $M_0$ there is a lowest important $1$.
We call them {\sl crucial} $1$s. (Specially crucial $1$s
are important $1$s too.)
We have $m$ many crucial $1$s, one is in each column of $M_0$.
A $1$ in $M_0$ that is non-important is called {\sl hiding}
$1$.

\begin{lem}
Consider a hiding $1$ in $M_0$. Then exactly one of the following two
possibilities holds:
\begin{itemize}
\item[(1)]
there is a crucial $1$ above it and a top-$1$ to the right of it,
\item[(2)]
there is a crucial $1$ on its left side (and of course a top-$1$ above it).
\end{itemize}
\end{lem}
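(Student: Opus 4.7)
The plan is to fix an arbitrary hiding $1$, call it $x$ at position $(r,c)$ in $M_0$; observe that, because $x$ is not a top-$1$, column $c$ already carries a top-$1$ at some $(r_0,c)$ with $r_0<r$ sitting above $x$ (so the parenthetical ``top-$1$ above it'' in (2) and the ``above it'' in (1) always have at least the top-$1$ of column $c$ available); and then split into three subcases according to whether row $r$ contains a top-$1$ and, if so, on which side of $c$ it lies. The only tool will be $\Gamma$-freeness of $M_0$ (inherited from $\widehat M$): three $1$s arranged as upper-left, upper-right and lower-left of a $2\times 2$ submatrix are forbidden.

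Before branching I would record two facts that each subcase reuses. \emph{(F1)} Every row of $M_0$ contains at most one top-$1$, because the columns of $M_0$ belong to pairwise distinct height classes and hence their top-$1$s occupy pairwise distinct rows; in particular, in a row that already contains a top-$1$, the only important $1$ of that row is that top-$1$ itself. \emph{(F2)} If $(a,b)$ and $(a,b')$ are both $1$ with $b<b'$, then $\Gamma$-freeness forces every entry $(a',b)$ with $a'>a$ to be $0$; in particular column $b$ has no $1$ (and hence no important $1$) strictly below row $a$.

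Three subcases then settle the lemma. \emph{(A.1)} If row $r$ has a top-$1$ at $(r,c_1)$ with $c_1>c$, apply (F2) to $(r,c),(r,c_1)$ to kill column $c$ below row $r$; consequently the crucial $1$ of column $c$ must lie in rows $\{r_0,\dots,r-1\}$, strictly above $x$, giving (1). Option (2) fails because by (F1) the sole important $1$ of row $r$ is $(r,c_1)$, which is not left of $c$. \emph{(A.2)} If the top-$1$ of row $r$ is at $(r,c_1)$ with $c_1<c$, apply (F2) to $(r,c_1),(r,c)$ to kill column $c_1$ below row $r$; then $(r,c_1)$ is the lowest important $1$ of column $c_1$, i.e.\ its crucial $1$, and sits in row $r$ to the left of $x$, giving (2). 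Option (1) fails because by (F1) row $r$ has no top-$1$ to the right of $x$. \emph{(B)} If row $r$ has no top-$1$, then (1) fails at once; and since $x$ is hiding but its row has no top-$1$, $x$ is not the leading $1$ of its row, so a leading $1$ $(r,c_*)$ with $c_*<c$ exists and is important. Applying (F2) to $(r,c_*),(r,c)$ promotes $(r,c_*)$ to the crucial $1$ of column $c_*$, giving (2).

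I do not expect anything deep here: the whole argument is case analysis driven by a single use of (F2) per branch. The only real care is bookkeeping, cleanly separating the five overlapping classes of $1$s (top-$1$, leading $1$, important, crucial, hiding), and verifying in subcase (A.1) via (F1) that no spurious competing crucial $1$ can appear in row $r$ to the left of $x$; this is precisely what secures the ``exactly one'' of (1) and (2).
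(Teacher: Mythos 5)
Your proposal is correct and follows essentially the same route as the paper: a case split on whether the row of the hiding $1$ contains a top-$1$ and, if so, on which side it lies, with $\Gamma$-freeness used once per branch to show the relevant important $1$ has nothing below it (hence is crucial). Your facts (F1) and (F2) are exactly the observations the paper uses implicitly, (F1) in particular being the paper's remark that the $1$s in a row containing a top-$1$ are not even important, which secures exclusivity.
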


\ex
The following figure exhibits
the two options: $1^{(h1)}$, $1^{(h2)}$ are two hiding
$1$s, corresponding case (1) and case (2) respectively. 
Top-$1$s are the circled $1$s.
The two hiding $1$s "share" the crucial $1$
in the lemma.
\[
\begin{pmatrix}
       & \vdots  &                &            &   &  \\
       &         & \ldots               & \t1     &   &     \\
       & \vdots  &                & \uparrow &   &  \\
 \dots & 1^{(c)} &     \leftarrow &  1^{(h2)}  & &     \\
       & \uparrow  &                & \vdots  &         &     \\
       &1^{(h1)} &     \rightarrow & & \rightarrow & \t1 \\
       & \vdots  &                &         &   &     \\
\end{pmatrix}
\]
\endex

\begin{proof}
Let $h$ be a hiding $1$ in $M_0$. 

First, assume that the row of $h$
does not contain a top-$1$.
Then the first $1$ in this row ($f$)
is an important $1$ (hence it differs from $h$).
Since the matrix is $\Gamma$-free, we cannot have
a $1$ under $f$, i.e.~$f$ is a crucial $1$.
$h$ is not important, so it is not a top-$1$.
The top-$1$ in its column must be above it. We obtained that 
case (2) holds.

Second, assume that the row of $h$ contains a top-$1$, $t$.
If $t$ is on the left of $h$ then the forbidden $\Gamma$ ensures
that under $t$ there is no other $1$. Hence $t$ is crucial 
and case (2) holds again.
If $t$ is on the right of $h$ then the forbidden $\Gamma$ ensures
that under $h$ there is no other $1$. Hence the lowest important $1$ 
in the column of $h$ (a crucial $1$) is above of it. Case (1) holds.

(1) and (2) cases are exclusive since if both are satisfied
then $h$ has a crucial $1$ on its left and a top-$1$ on its right.
That is impossible since the $1$s in a row of a top-$1$ are not even
important.
\end{proof}

Let $h$ be a hiding $1$.
There must be a unique crucial $1$
corresponding to it:
If $h$ satisfies case $(1)$, then
it is the crucial one above it.
If $h$ satisfies case $(2)$, then
it is the crucial one on the left side of it.
In this case we say that this crucial $c$
is {\it responsible} for $h$.

Take a crucial $1$ in $M_0$, that we call
$c$. For any top-$1$, $t$ that comes in 
a later column and it is higher than $c$
the position in the row of $c$ under $t$ we call {\sl questionable}.
Also for any top-$1$, $t$ that comes in a later column and it is lower than $c$
the position in the column of $c$ before $t$ we call {\sl questionable}.
In $M_0$ there are $m$ many crucial $1$. If $c$ is in the $i^{\text{th}}$
column, then there are $m-i$ column that comes later 
and each defines one questionable
position.

The meaning of the lemma is that
each hiding $1$ must be in a questionable position.

\ex
We continue our previous example (but only $M_0$ is followed on). 
We
circled the top-$1$s in $M_0$.
We added an index $c$ to crucial $1$s.
(All the important $1$s are identified so we
are able to locate these elements.
\[
\widehat M=\begin{pmatrix}
\t1^{(c)}&0&0&0&0\\
~&0&0&0&\t1^{(c)}\\
~&\t1&0&0&~\\
~&~&\t1^{(c)}&0&~\\
0&1^{(c)}&~&0&~\\
~&~&~&\t1^{(c)}&~\\
0&0&0&0&0\\
\end{pmatrix}
\]

According to our argument,
for the crucial $1$ in the first column there are
four top-$1$s in the later columns and
there are four questionable positions
corresponding to them.
We mark them as $?_1$.
Similarly,
three questionable positions belongs 
to the crucial $1$ in the second column, marked as $?_2$.
(If there is  hiding $1$ in one of these positions then
the crucial $1$ of the second column 
would be responsible to it).
We put $?$ to each questionable position and
add an index marking the column of the crucial $1$ that
is connected to it:
\[
M_0=\begin{pmatrix}
\t1_c&0      &0        &0        &0        \\
?_1  &0      &0        &0        &\t1^{(c)}\\
?_1  &\t1    &0        &0        &~        \\
?_1  &~      &\t1^{(c)}&0        &?_3      \\
0    &1^{(c)}&?_2      &0        &?_2      \\
?_1  &?_2    &?_3      &\t1^{(c)}&?_4      \\
0&0&0&0&0\cr
\end{pmatrix}
\]
Note that there are positions that are not questionable.
The lemma says there can not be a hiding $1$. Indeed, a $1$
at these positions would create a $\Gamma$.
\endex

First rephrase our lemma:

\begin{cor}
All hiding $1$s are in questionable positions.
\end{cor}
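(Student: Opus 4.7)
The plan is to observe that the corollary is an almost immediate consequence of the preceding lemma: one only needs to translate the two cases of the lemma into the definition of ``questionable position.'' So the proof is a short case analysis, not a new argument.

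Concretely, let $h$ be a hiding $1$ in $M_0$, and apply the lemma. In case (1) there is a crucial $1$ $c$ in the same column as $h$, sitting above $h$, and a top-$1$ $t$ in the same row as $h$, sitting to the right of $h$. Then $t$ lies in a strictly later column than $c$ (since $t$ is to the right of $h$ and $c$ shares $h$'s column), and $t$ is strictly lower than $c$ (since $t$ shares $h$'s row and $c$ is above $h$). The questionable position associated to $c$ and this $t$ is, by definition, the intersection of the column of $c$ with the row of $t$, i.e.\ the column of $h$ and the row of $h$, which is exactly the position of $h$.

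Case (2) is symmetric. Here there is a crucial $1$ $c$ in the same row as $h$ and to its left, together with a top-$1$ $t$ in the same column as $h$ and above $h$. Then $t$ lies in a strictly later column than $c$ (namely the column of $h$) and is strictly higher than $c$ (its row is above $h$'s row, which is $c$'s row). The questionable position attached to $c$ and $t$ in this regime is the intersection of the row of $c$ with the column of $t$, which again is the position of $h$.

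Since the lemma guarantees that every hiding $1$ falls in case (1) or (2), the two displays above show its position is questionable, proving the corollary. The only thing to watch is that the definitions of ``questionable position'' split according to whether $t$ is higher or lower than $c$; matching case (1) to the ``lower'' clause and case (2) to the ``higher'' clause is the entire content, and there is no obstacle beyond this bookkeeping.
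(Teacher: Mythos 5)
Your proof is correct and is essentially the paper's own argument: the paper offers no separate proof at all, simply asserting that the corollary is a rephrasing of the lemma, and your case analysis is exactly the bookkeeping that justifies that assertion (case (1) of the lemma matches the ``$t$ lower than $c$'' clause of the definition of questionable positions, case (2) matches the ``$t$ higher than $c$'' clause). Nothing further is needed.
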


It is obvious that we have $(m-1)+(m-2)+\ldots+2+1$
many questionable positions (to the crucial $1$ in the $i^\text{th}$
column there are $m-i$ many questionable position is assigned).

Easy to check that if we put the important $1$s into $M_0$ and
add a new $1$ into a questionable position then we
won't create a $\Gamma$ configuration.
The problem is that the different questionable positions
are not independent.

\begin{lem}
There are $m!$ ways to fill the questionable positions with $0$s
and $1$s without forming a $\Gamma$.
\end{lem}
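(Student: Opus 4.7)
The plan is to prove the count by induction on $m$, using the responsibility structure from the preceding lemma to decompose each valid filling into the contribution of one crucial together with a smaller sub-problem. The base case $m = 0$ is trivial (no questionable positions, one empty filling, matching $0! = 1$).

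For the inductive step, I would single out the crucial $c_1$ in the leftmost column of $M_0$. By the preceding lemma, $c_1$ is the responsible crucial for exactly $m - 1$ questionable positions, one for each pair $(1,j)$ with $2 \le j \le m$: each such q-position lies either in the row of $c_1$ (when $t_j$ is higher than $c_1$) or in the column of $c_1$ (when $t_j$ is below). The first task is to analyze, using $\Gamma$-avoidance, exactly which fillings of these $m - 1$ positions are admissible when the remaining q-positions (those for pairs $(i,j)$ with $2 \le i < j \le m$) are unspecified. The key geometric facts to establish are: (a) at most one hidden $1$ can appear in the column of $c_1$, because two such hidden $1$s together with the top-$1$ in the row of the higher of them would form a $\Gamma$; and (b) the possible patterns of hidden $1$s in the row of $c_1$ are controlled by the positions of the crucials $c_j$ in the columns to the right. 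A careful case analysis should identify exactly $m$ mutually exclusive ``states'' of $c_1$'s responsibility, matching the Lehmer-code factor $(m - 1) + 1 = m$ in the decomposition $m \cdot (m - 1) \cdots 1 = m!$.

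Next, I would show that once $c_1$'s state is fixed, the remaining $\binom{m-1}{2}$ questionable positions form a smaller instance of the same problem on a reduced matrix $M_0'$ (obtained by deleting column $1$ and suitably restricting the row partition), with $m - 1$ crucials $c_2, \ldots, c_m$ and their associated top-$1$s. The inductive hypothesis applied to $M_0'$ then yields $(m - 1)!$ valid fillings of the remaining q-positions. Multiplying, the total is $m \cdot (m-1)! = m!$, as required.

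The hard part will be verifying, in Step 2, that the $m$ chosen states of $c_1$ factor cleanly against the inductive count. The subtlety is that certain ``joint states'' of $c_1$'s responsibility (for instance, two hidden $1$s simultaneously in the row of $c_1$) are only locally compatible and impose further constraints on some q-positions belonging to other crucials, while conversely, some naive states are forbidden by cross-interactions with q-positions of $c_2, \ldots, c_m$. The balance between these ``extra'' admissible joint states and the ``lost'' forbidden ones must work out to exactly one state per Lehmer slot; verifying this balance is where the main combinatorial work lies. The previous lemma's responsibility map, together with the geometric observation that each hidden $1$ in a questionable position is trapped inside a specific L-shape anchored at one crucial and one top-$1$, will be the essential tools for this verification.
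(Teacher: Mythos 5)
Your overall strategy is the same as the paper's: peel off the leftmost crucial $c_1$, show it contributes a factor of $m$, and recurse to get $(m-1)!$ on the remaining columns --- the Lehmer-code factorization $m\cdot(m-1)\cdots 1$. However, the step you explicitly defer as ``the hard part'' is precisely the content of the lemma, and your proposal does not supply the idea that makes the factorization work; as written, the decomposition does not factor.

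Two concrete problems. First, the $m$ ``states'' of $c_1$ cannot be taken to be the full joint contents of the $m-1$ questionable positions assigned to $c_1$. It is true that the down-set $D_{c_1}$ can hold at most one $1$ and that a $1$ in $D_{c_1}$ excludes any $1$ in the right-set $R_{c_1}$ (both by $\Gamma$-avoidance; the paper records these as observations (i) and (ii)), but $R_{c_1}$ can contain \emph{several} $1$s in a valid completion, so the number of admissible joint fillings of $c_1$'s own positions is in general larger than $m$. The paper's fix is to encode only partial information about $c_1$: the position of the \emph{first} $1$ in $R_{c_1}$, or the position of the unique $1$ in $D_{c_1}$, or ``all zero'' --- exactly $m$ mutually exclusive outcomes --- and to push any further $1$s of $R_{c_1}$ into the subproblem. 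Second, the subproblem is not literally ``delete column $1$ and recurse on the same structure'': when the revealed first $1$ of $R_{c_1}$ sits under the crucial $c_j$ of a later column $j$, that revealed $1$ must \emph{replace} $c_j$ as the anchor for column $j$ (it forces $R_{c_j}$ and the questionable positions between $c_j$ and the new $1$ to be $0$, and the new crucial's questionable positions become those down-and-right of it). This re-anchoring is what guarantees that the $i$-th column again contributes exactly $m-i+1$ outcomes and closes the induction. Without these two ingredients the ``balance between extra and lost states'' you mention is asserted rather than proved, so the argument has a genuine gap at its central step.
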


\begin{proof}
Let $c$ be a crucial $1$. We divide the set of questionable
positions that corresponds to $c$, depending their positions
relative to $c$ into two parts: Let $R_c$ be the set of questionable
positions in the row of $c$, that is right from $c$.
Let $D_c$ be the set of questionable positions in the column of $c$, that is
down from $c$.

The following two observation is immediate:
\begin{itemize}
\item[(i)]
At most one of $R_c$ and $D_c$ contains a $1$.
\item[(ii)]
If $D_c$ contains a $1$ (hence $R_c$ is empty), then it contains
only one $1$.
\end{itemize}
Indeed, if the two claims are not true then 
we can easily recognize a $\Gamma$.

For each crucial $c$
describe the following `piece of information':
$I_1$: the position of the first $1$ in $R_c$ or 
$I_2$: the position of the only one $1$ in $D_c$ (this informs 
us that $R_c$ contains only $0$s) or
$I_3$: say ``all the positions of $R_c\cup D_c$ contain $0$''.

If $c$ comes from the first column of $M_0$, then
we have $m$ many outcomes for this piece of information.
$m-1$ many of these are such that one position of a $1$ is
revealed (the first $1$ in $R_c$ or the the only one $1$ in $D_c$). 
Placing a $1$ there 
doesn't harm the $\Gamma$-free
property of our matrix. One possible outcome of the information
is the one that reveals that there is
no $1$ in $R_c\cup D_c$.

\ex
In our example let $c$ be the crucial $1$ of the first column, i.e.~the
first element of the first row:
\[
M_0=\begin{pmatrix}
c=\t1^{(c)}&0      &0        &0        &0        \\
?_1        &0      &0        &0        &\t1^{(c)}\\
?_1        &\t1    &0        &0        &0        \\
?_1        &0      &\t1^{(c)}&0        &?_3      \\
0          &1^{(c)}&?_2      &0        &?_2      \\
?_1        &?_2    &?_3      &\t1^{(c)}&?_4      \\
0          &0      &0        &0        &0        \\
\end{pmatrix}
\]
$R_c$ is empty, $D_c$
contains four positions from the first column (2$^\text{nd}$,
3$^\text{rd}$, 4$^\text{th}$, 6$^\text{th}$).
So when we reveal the above mentioned information about $c$
then we have the following $5$ possibile outcomes:
\begin{itemize}
\item[$I_2$(2):]
``The only $1$ under $c$ is in the second row and 
there is no $1$ in the row of $c$.''
\item[$I_2$(3):]
``The only $1$ under $c$ is in the third row and 
there is no $1$ in the row of $c$.''
\item[$I_2$(4):]
``The only $1$ under $c$ is in the fourth row and 
there is no $1$ in the row of $c$.''
\item[$I_2$(6):]
``The only $1$ under $c$ is in the sixth row and 
there is no $1$ in the row of $c$.''
\item[$I_3$:]
``There is no $1$ under and after $c$.''
\end{itemize}

Let us assume that we get the the third possibility 
($I_2(4)$)
as the additional information. Then we can continue filling the missing elements
of $M_0$:
\[
M_0=\begin{pmatrix}
1&0&0&0&0\\
0&0&0&0&\t1^{(c)}\\
0&\t1&0&0&0\\
1&0&\t1^{(c)}&0&?_3\\
0&c=1^{(c)}&?_2&0&?_2\\
0&?_2&?_3&\t1^{(c)}&?_4\\
0&0&0&0&0\\
\end{pmatrix}
\]
Now let $c$ the crucial $1$ in the second column.
$R_c$ contains two positions
(3$^{\text{rd}}$ and 5$^{\text{th}}$ column), $D_c$
contains one position from the column of the actual $c$
(the one in the 6$^\text{th}$ row).
So when we reveal the above mentioned information about $c$
then we have the following $4$ possibilities:
\begin{itemize}
\item[$I_1(3)$:]
``The first $1$ after $c$ is in  the third column and there is
no $1$ in $D_c$.''
\item[$I_1(5)$:]
``The first $1$ after $c$ is in  the fifth column and there is
no $1$ in $D_c$.''
\item[$I_2(6)$:]
``The only $1$ under $c$ is in the sixth row and 
there is no $1$ in the row of $c$.''
\item[$I_3$:]
``There is no $1$ under and after $c$.''
\end{itemize}
Let us assume that we get the the first possibility 
($I_1(3)$)
as an additional information. The hidden $1$ that is 
revealed is under the crucial $1$ ($c'$) of its column.
The $\Gamma$-free property of $M$ (and hence $M_0$)
guarantees that cannot be a hiding $1$ after $c'$.
Again we summarize the information gained:
\[
M_0=\begin{pmatrix}
1&0&0&0&0\\
0&0&0&0&\t1^{(c)}\\
0&\t1&0&0&0\\
1&0&c'=\t1^{(c)}&0&0\\
0&1^{(c)}&1&0&?_2\\
0&0&?_3&\t1^{(c)}&?_4\\
0&0&0&0&0\\
\end{pmatrix}
\]
In the third column the old crucial $1$ ($c'$)
will be replaced by the $1$, ($c$) revealed by the 
previous information.
\[
M_0=\begin{pmatrix}
1&0&0&0&0\\
0&0&0&0&\t1^{(c)}\\
0&\t1&0&0&0\\
1&0&c'=\t1&0&0\\
0&1^{(c)}&c=1^{(c)}&0&?_3\\
0&0&?_3&\t1^{(c)}&?_4\\
0&0&0&0&0\\
\end{pmatrix}
\]
\endex

If we get $I_2$ or $I_3$ then we know all the elements at 
the questionable positions corresponding to $c$. In this 
case we can inductively
continue and finish the description of $M$.
If the information, we obtain is $I_1$ then our knowledge about the
$1$s at the questionable positions corresponding to $c$ is not complete.
But we can deduce many additional information.

Assume that $I_1$ says that on the right of $c$ the first
$1$ in questionable position is in the $j^{\text{th}}$ column.
Let $\widetilde c_j$ be the position
of this $1$. The position $c_j$ is above of it.
We know that $R_i$ doesn't contain a $1$ (indeed, that would form 
a $\Gamma$ with the $1$s at $c_j$ and at $\widetilde c_j$).
For similar reasons also we cannot have a $1$ 
at a questionable position between
$c_j$ and $\widetilde c_j$.

This knowledge guarantees that we can substitute $c_j$ with
$\widetilde c_j$ ($\widetilde c_j$ will be a crucial $1$ substituting $c_j$).
The corresponding questionable positions will be the
questionable positions that are down and right from
it. We still encounter all the hiding ones (there must be at
the questionable positions corresponding to
the crucial $1$s, we didn't confronted yet).
So we can induct.

The above argument
proves that
any element
of  $\{1,2,\ldots,m\}\times\{1,2,\ldots,m-1\}\times\{1,2\}\times\{1\}$
codes the
outcome of the information revealing
process, hence a $\Gamma$-free completion of our previous knowledge.
The $i^{\text{th}}$ component of the code says
that in the $i^{\text{th}}$ column
of $M_0$ which information on the actual crucial $1$
is true.
Our previous argument just describe how to do the first
few steps of the decoding 
and how to recursively continue it.
\end{proof}

The lemma finishes the enumeration
of $\Gamma$-free $0\text{-}1$ matrices of size $n\times k$.
Also finishes a description of a constructive bijection from
$\mathcal G_n^{(k)}$ to the obvious poly-Bernoulli set.
Our main theorem is proven a bijective way.


\section{Combinatorial proofs}

\begin{thm}
\[
\B_n^{(-k)}=\B_k^{(-n)}.
\]
\end{thm}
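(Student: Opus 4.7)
The plan is to prove the symmetry bijectively by choosing a poly-Bernoulli family in which swapping $n$ and $k$ is manifestly realized by a natural involution. Among the interpretations reviewed in Section 2, the lonesum-matrix family $\mathcal{L}_n^{(k)}$ is the most convenient, because Ryser's characterization singles out lonesum matrices by the two forbidden $2\times 2$ submatrices $\begin{pmatrix}1&0\\0&1\end{pmatrix}$ and $\begin{pmatrix}0&1\\1&0\end{pmatrix}$, each of which is fixed by transposition.

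Concretely, I would exhibit the map $\tau\colon \mathcal{L}_n^{(k)} \to \mathcal{L}_k^{(n)}$ sending an $n\times k$ lonesum matrix $M$ to its transpose $M^T$, which is a $k\times n$ matrix. Since every $2\times 2$ submatrix of $M^T$ is the transpose of a $2\times 2$ submatrix of $M$, and since both forbidden patterns coincide with their own transposes, $M$ avoids the forbidden submatrices if and only if $M^T$ does. Hence $\tau$ is a well-defined involution between the two families. Combining this with Brewbaker's theorem, which gives $|\mathcal{L}_n^{(k)}| = \B_n^{(-k)}$ and $|\mathcal{L}_k^{(n)}| = \B_k^{(-n)}$, immediately yields $\B_n^{(-k)}=\B_k^{(-n)}$.

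I do not expect any real obstacle, because the combinatorial content has already been developed in the previous sections; the work lies entirely in picking the right representation so that the symmetry becomes a tautology. For readers preferring a different viewpoint, the identity follows just as cleanly from the acyclic-orientation interpretation, since $K_{n,k} \cong K_{k,n}$ as abstract graphs and relabeling the two color classes sets up a bijection $\mathcal{O}_n^{(k)} \to \mathcal{O}_k^{(n)}$. Alternatively, on the noncombinatorial side, the Arakawa--Kaneko formula $\B_n^{(-k)}=\sum_{m}m!\stirling{n+1}{m+1}m!\stirling{k+1}{m+1}$ is already symmetric in $n$ and $k$ by inspection, so the statement can also be read off directly from the formula; the bijective proof via $M\mapsto M^T$ is preferable because it is in the spirit of the rest of the paper.
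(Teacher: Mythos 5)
Your proposal is correct and matches the paper's approach: the paper simply remarks that the symmetry is obvious from any of the combinatorial definitions and is also visible in the Arakawa--Kaneko formula, and your transposition bijection on lonesum matrices is exactly the fleshed-out version of that remark. No discrepancy to report.
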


The relation originally was proven by Kaneko.
It is obvious from any of the combinatorial
definitions. Arakawa--Kaneko formula also exhibits this symmetry
an algebraic way.

\begin{thm}
\[
\B_n^{(-k)}=\B_n^{(-(k-1))}+
\sum_{j=1}^n {n\choose j}\B_{n-(j-1)}^{(-(k-1))}.
\]
\end{thm}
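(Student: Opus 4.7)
The plan is to partition $\mathcal G_n^{(k)}$ according to the configuration of the first column and match the blocks with the terms on the right-hand side. The term $\B_n^{(-(k-1))}$ will count matrices whose first column is entirely zero, while $\binom{n}{j}\B_{n-(j-1)}^{(-(k-1))}$ counts those whose first column carries exactly $j$ ones, located at some $j$-subset $S=\{i_1<\cdots<i_j\}\subseteq[n]$. Combined with the main theorem $|\mathcal G_n^{(k)}|=\B_n^{(-k)}$, this will immediately yield the recursion.

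The key structural observation is this: in any $M\in\mathcal G_n^{(k)}$ whose first column carries ones at rows $i_1<i_2<\cdots<i_j$, the rows $i_1,\ldots,i_{j-1}$ must be zero outside column $1$. Indeed, the $1$ at $(i_m,1)$ for $m<j$ has the $1$ at $(i_j,1)$ strictly below it, so any further $1$ to its right in row $i_m$ would complete a forbidden $\Gamma$. Only the bottom row $i_j$ escapes this constraint.

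The bijection is then natural. When the first column is identically zero, strip it off to obtain an element of $\mathcal G_n^{(k-1)}$; the inverse prepends a zero column. When $j\geq 1$, delete column $1$ together with the $j-1$ (now known to be zero) rows $i_1,\ldots,i_{j-1}$, landing in $\mathcal G_{n-j+1}^{(k-1)}$. The inverse accepts a subset $S\subseteq[n]$ of size $j$ together with a matrix $M'\in\mathcal G_{n-j+1}^{(k-1)}$, reinserts $j-1$ zero rows at positions $i_1,\ldots,i_{j-1}$, and prepends a first column whose ones sit exactly at $S$.

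The real point to verify, and the main obstacle, is that this reconstruction does yield a $\Gamma$-free matrix. Any $\Gamma$ lying entirely in columns $2,\ldots,k$ would descend to a $\Gamma$ in $M'$, which is impossible. Any $\Gamma$ whose left-upper corner lies in column $1$ would need both a $1$ to its right in that row and a $1$ below it in column $1$: the former is absent when the row belongs to $\{i_1,\ldots,i_{j-1}\}$ (those rows are zero outside column $1$ by construction), and the latter is absent when the row is $i_j=\max S$. Hence every pair $(S,M')$ produces a legitimate $\Gamma$-free matrix, the map is bijective, and summing $\binom{n}{j}\B_{n-(j-1)}^{(-(k-1))}$ over $j\geq 1$ together with the all-zero contribution $\B_n^{(-(k-1))}$ delivers exactly $\B_n^{(-k)}$.
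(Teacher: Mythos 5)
Your proposal is correct and follows essentially the same route as the paper: both decompose $\mathcal G_n^{(k)}$ by the first column, observe that all but the last row carrying a $1$ in that column must vanish elsewhere (else a $\Gamma$ appears), and reduce to a $\Gamma$-free matrix of size $(n-j+1)\times(k-1)$. Your verification that the inverse reconstruction is $\Gamma$-free is slightly more explicit than the paper's, but the argument is the same.
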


\begin{proof}
Our main theorem gives that
$\B_n^{(-k)}$ counts the $\Gamma$-free matrices of size $n\times k$.

Each row of a $\Gamma$-free matrix
\begin{itemize}
\item[A.] 
starts with a $0$, or
\item[B.] 
starts with a $1$, followed only by $0$s, or
\item[C.] 
starts with a $1$, and contains at least one more $1$.
\end{itemize}
Let $j$
denote the number of rows of type B/C.

If $j=0$, then the first column is all-$0$ column,
and it has $\B_n^{(-(k-1))}$
many extensions as $\Gamma$-free matrix.

If $j\geq 1$, then we must choose the 
$j$ many rows of type B/C.
Our decision describes the first column of
our matrix. The first $j-1$ many chosen rows 
cannot contain any other
$1$, since a $\Gamma$ would appear. I.e.~they are type B,
and completely described.

The further elements (a submatrix of size $(n-j+1)\times (k-1)$)
can be filled with an arbitrary $\Gamma$-free matrix.
The recursion is proven.
\end{proof}

We can state
the theorem (without a reference to the main theorem)
as a recursion for $|\mathcal G_n^{(k)}|$.
Since the same recursion
is known for
$\B_n^{(-k)}$, an easy induction proves the main theorem.
Our first proof, the main part of this paper
is purely combinatorial and explains a previously
known recursion without algebraic
manipulations of generating functions.

\begin{thm}
\[
\sum_{n,k\in\N: n+k=N} (-1)^n \B_n^{(-k)}=0.
\]
\end{thm}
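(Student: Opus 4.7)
The plan is to interpret the signed sum combinatorially through the main theorem: rewrite $T_N:=\sum_{n+k=N}(-1)^n\B_n^{(-k)}$ as $\sum_{n+k=N}(-1)^n|\mathcal{G}_n^{(k)}|$ and look for a sign-reversing involution on the disjoint union $\bigsqcup_{n+k=N}\mathcal{G}_n^{(k)}$ that flips the parity of $n$.

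The first move is transposition. The three cells of a $\Gamma$-pattern, namely $(r,c), (r,c'), (r',c)$ with $r<r'$ and $c<c'$, are sent by $M\mapsto M^T$ to $(c,r), (c',r), (c,r')$, which again form a $\Gamma$ in the transposed matrix. Hence $M\mapsto M^T$ is an involution $\mathcal{G}_n^{(k)}\leftrightarrow\mathcal{G}_k^{(n)}$. When $N$ is odd we have $n\neq k$ in every term and $(-1)^n=-(-1)^{N-n}$, so transposition pairs every matrix with a distinct one of opposite sign (there can be no fixed points, since those would require $n=k$). All contributions cancel in pairs, giving $T_N=0$ for odd $N$ immediately.

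For even $N$ the transpose trick is sign-preserving and additional input is required. My next plan is to induct on $N$, substituting the recursion $|\mathcal{G}_n^{(k)}|=|\mathcal{G}_n^{(k-1)}|+\sum_{j=1}^{n}\binom{n}{j}|\mathcal{G}_{n-j+1}^{(k-1)}|$ of the preceding theorem into every term of $T_N$ with $k\geq 1$. Separating the $(n,k)=(N,0)$ boundary term, applying the recursion, and re-indexing via $(a,b):=(n-j+1,\,N-n-1)$ reduces the claim (given $T_{N-1}=0$, which for $N$ even follows from the odd case just settled) to the auxiliary identity
\[
\sum_{\substack{a\geq 1,\; b\geq 0\\ a+b\leq N-1}}(-1)^b\binom{N-1-b}{a-1}\,|\mathcal{G}_a^{(b)}|=1 \qquad (N\geq 2).
\]
One would then try to prove this identity combinatorially by a sign-reversing involution on the signed pairs (a $\Gamma$-free $a\times b$ matrix, an $(a{-}1)$-subset of an $(N{-}1{-}b)$-element set), leaving exactly one surviving configuration contributing $+1$.

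The main obstacle is precisely the even $N$ case. Transposition alone is powerless, and while the recursion cleanly reduces matters to the auxiliary identity above, locating a transparent sign-reversing involution on those decorated pairs is the delicate step: a naive re-substitution of the recursion into the auxiliary sum collapses to a tautology, so some genuinely new combinatorial ingredient is needed. A unified "one-shot" involution on $\Gamma$-free matrices of total size $N$ that handles both parities — perhaps trading a distinguished row for a distinguished column based on the pattern of the first column — would be preferable, but none that I see both preserves $\Gamma$-freeness and flips the parity of $n$ without exceptional cases.
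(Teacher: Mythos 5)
Your argument has a genuine gap: the case of even $N$, which is where all the content of the theorem lies, is left unproven. The transposition involution for odd $N$ is correct ($\Gamma$ is a self-transpose pattern, so $M\mapsto M^T$ maps $\mathcal G_n^{(k)}$ onto $\mathcal G_k^{(n)}$ and has no fixed points when $n\neq k$), but note that this part is really just the symmetry $\B_n^{(-k)}=\B_k^{(-n)}$ in disguise and is the easy half. Your reduction of the even case via the recursion to the auxiliary identity $\sum_{a\geq 1,\,b\geq 0,\,a+b\leq N-1}(-1)^b\binom{N-1-b}{a-1}|\mathcal G_a^{(b)}|=1$ is algebraically sound, but you then explicitly stop without proving that identity, so the proposal does not establish the theorem. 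The missing idea is a parity-reversing map that changes $n$ by exactly one rather than by $k-n$: the paper finds this not in the matrix model but in Callan's permutation model. There one splits $\mathcal C_n^{(k)}$ according to whether the leading $0$ is followed by a left value or a right value, and builds a bijection $\varphi\colon\mathcal C_n^{(k)}(\ell)\to\mathcal C_{n-1}^{(k+1)}(r)$ by recoloring the element $1$ as a (new smallest) right value and, if necessary, moving the right-value block preceding it to the front. Since $\varphi$ trades one left value for one right value, it reverses $(-1)^n$, and summing over all decompositions $n+k=N$ cancels everything for every parity of $N$ in one stroke. If you want to stay inside the $\Gamma$-free matrix world, you would need an analogous operation converting an $n\times k$ matrix into an $(n-1)\times(k+1)$ one (e.g., transported through the bijection of the main theorem), which is precisely the ``genuinely new combinatorial ingredient'' you correctly sense is required but do not supply.
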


\begin{proof}
We use Callan's description of poly-Bernoulli numbers.
We consider Callan permutations of $N$ objects 
(the extended base set has
size $N+2$).
We underline that to speak about Callan permutations
we must divide the $N$ objects into left and right value category.
For this we need to write $N$ as a term sum: $n+k$.

For technical reasons we change the base set
of our permutations.
The extended left values remain $0,1,2,\ldots,n$,
the extended right values will be
$\bf{1},\bf{2},\ldots,\bf{k},\bf{k+1}$.
We note that
the calligraphic distinction between the left and right values allows
us to use any $n+1$ numbers for the extended
left values, and the same is true for the right values.

The combinatorial content of the claim is that if we consider
Callan permutation of $N$ objects (with all possible $n+k$ partitions),
then those where the number of left values is even has the
same cardinality as 
those where the number of left values is odd.

$\B_n^{(-k)}$ is the size of $\mathcal C_n^{(k)}$.
We divide it into two subsets according to the type
of the element following the leading $0$.
Let
$\mathcal C_n^{(k)}(\ell)$ be the set of those
elements from $\mathcal C_n^{(k)}$, where the leading $0$ is followed
by a left value element.
Let
$\mathcal C_n^{(k)}(r)$ be the set of those
elements from $\mathcal C_n^{(k)}$, where the leading $0$ is followed
by a right value element.

We give two examples:
\[
\mathcal C_3^{(1)}(\ell)=\{
01{\bf 1}23{\bf 2},
02{\bf 1}13{\bf 2},
03{\bf 1}12{\bf 2},
012{\bf 1}3{\bf 2},
013{\bf 1}2{\bf 2},
023{\bf 1}1{\bf 2},
0123{\bf 12}
\},\]
\[
\mathcal C_2^{(2)}(r)=\{
0{\bf 1}1{\bf 2}2{\bf 3},
0{\bf 1}2{\bf 2}1{\bf 3},
0{\bf 1}12{\bf 23}
0{\bf 12}12{\bf 3},
0{\bf 2}1{\bf 1}2{\bf 3},
0{\bf 2}2{\bf 1}1{\bf 3},
0{\bf 2}12{\bf 13},
\}.\]

We will describe a 
$\varphi: \mathcal C_n^{(k)}(\ell)\to \mathcal C_{n-1}^{(k+1)}(r)$
bijection.
(Hence we will have a
$\varphi: \mathcal C_n^k(r)\to \mathcal C_{n+1}^{(k-1)}(\ell)$
bijection too.)
Our map reverses the parity of the number of left values and completes
the proof.

The bijection goes as follows:
Take a permutation from
$\mathcal C_n^{(k)}(\ell)$.
Find `1' in the permutation.
It follows the leading $0$ or
it will be the first element of a block of left values
(that is preceded by a block of right value, say $R$).
In the first case we substitute $1$ by ${\bf 0}$.
In the second case
we also substitute $1$ by ${\bf 0}$ but additionally
we move the $R$ block
right after the leading $0$.

We warn the reader that the image permutations
have extended left values $0,2,\ldots,n$ and extended
right values $\bf{0,1,\ldots,k+1}$. This change does not
effect the essence.

An example helps to digest the technicalities:
\[
012{\bf 1}3{\bf 2} \to 0{\bf 0}2{\bf 1}3{\bf 2} \equiv
0{\bf 1}1{\bf 2}2{\bf 3},
\]
\[
03{\bf 1}12{\bf 2} \to 0{\bf 1}3{\bf 0}2{\bf 2} \equiv
0{\bf 2}2{\bf 1}1{\bf 3}.
\]

The inverse of our map can be easily constructed.
It must be based on ${\bf 1}$. The details are left to the reader.
\end{proof}

\section{Conclusion}

We presented a summary of previous descriptions of
poly-Bernoulli numbers, including a new one. 
Our list isn't as respectful
as Stanley's list for Catalan numbers, but suggests
that poly-Bernoulli numbers are natural and central.
The last two proofs are the only combinatorial explanations
(as far we know) for basic relationships for poly-Bernoulli numbers.
We expect further combinatorial
definitions and proofs, enriching the understanding
poly-Bernoulli numbers.

\end{document}